\date{June 10, 2024}
\numberwithin{equation}{section}
\theoremstyle{plain}
 \newtheorem{theorem}{Theorem}[section]
 \newtheorem*{theorem*}{Theorem}
 \newtheorem*{lemma*}{Lemma}
 \newtheorem{proposition}[theorem]{Proposition}
 \newtheorem*{fact*}{Fact}
 \newtheorem{lemma}[theorem]{Lemma}
 \newtheorem{corollary}[theorem]{Corollary}
\theoremstyle{remark}
 \newtheorem{definition}[theorem]{Definition}
 \newtheorem{remark}[theorem]{Remark}
 \newtheorem*{remark*}{Remark}
 \newtheorem*{acknowledgements}{Acknowledgements}
 \newtheorem{example}[theorem]{Example}
\numberwithin{equation}{section}
\newcommand{\vect}[1]{\boldsymbol{#1}}
\newcommand{\R}{{\Bbb R}}
\newcommand{\Z}{{\Bbb Z}}
\renewcommand{\phi}{\varphi}
\renewcommand{\epsilon}{\varepsilon}
\newcommand{\op}{\operatorname}
\newcommand{\mc}[1]{{\mathcal #1}}
\newcommand{\mb}[1]{{\mathbf #1}}
\newcommand{\pmt}[1]{{\begin{pmatrix} #1  \end{pmatrix}}}
\newcommand{\inner}[2]{\left\langle{#1},{#2}\right\rangle}
\title[]{%
Null hypersurfaces as 
wave fronts in Lorentz-Minkowski space}
\author[]{
S.~Akamine, A.~Honda,  M.~Umehara and  
        K.~Yamada 
}   
\address[Shintaro Akamine]{%
Department of Liberal Arts, College of Bioresource
Sciences,
Nihon University,1866 Kameino, Fujisawa, Kanagawa, 252-0880, Japan
}
\email{akamine.shintaro@nihon-u.ac.jp}
\address[Atsufumi Honda]{
Department of Applied Mathematics, 
Faculty of Engineering, Yokohama National University,
79-5 Tokiwadai, Hodogaya, Yokohama 240-8501, Japan
}
\email{honda-atsufumi-kp@ynu.ac.jp}
\address[Masaaki Umehara]{%
   Department of Mathematical and Computing Sciences,
   Tokyo Institute of Technology,
   Tokyo 152-8552, Japan
}
\email{umehara@is.titech.ac.jp}
\address[Kotaro Yamada]{%
   Department of Mathematics,
   Tokyo Institute of Technology,
   Tokyo 152-8551, Japan
}
\email{kotaro@math.titech.ac.jp}
\subjclass[2010]{%
 Primary 53C50;   
 Secondary 
 53C42,     
 53B30.     
}%
\keywords{%
   light-like hypersurface, 
    null hypersurface, singular point, wave front}%
\thanks{
The first author was
partially supported by the
Grant-in-Aid for Young Scientists No.~19K14527.
The second author was
partially supported by the
Grant-in-Aid for Young Scientists 
No.~19K14526 and (B) No.\  20H01801.
The third and fourth authors were partially supported by
(B) No.\  21H00981 and (B) No.\  17H02839, respectively, from 
Japan Society for the Promotion of Science.
}
\begin{document}
\maketitle

\begin{abstract}
In this paper, we show that \lq\lq $L$-complete
null hypersurfaces'' 
(i.e. ruled hypersurfaces foliated by entirety of light-like lines)
as wave fronts in the $(n+1)$-dimensional 
Lorentz-Minkowski space  
are canonically induced by 
hypersurfaces in the $n$-dimensional Euclidean space.
As an application, we show that
most of null wave fronts can be
realized as  restrictions of
certain $L$-complete
null wave fronts. 
Moreover, we determine $L$-complete
null wave fronts whose singular sets are compact.
\end{abstract}

\tableofcontents

\section*{Introduction} \label{sec:I} 

We denote by $\R^{n+1}_1$ the $(n+1)$-dimensional
Lorentz-Minkowski space.
A null hypersurface  in $\R^{n+1}_1$
is a $C^\infty$-immersion whose induced metric
degenerates everywhere.
Such a 
hypersurface is also called a {\it light-like hypersurface}
and is locally foliated by
light-like lines (cf. \cite[Fact 2.6]{AHUY}).
Roughly speaking,
a null hypersurface
is said to be
{\it $L$-complete} if 
each light-like line 
is the entirety of a straight line in $\R^{n+1}_1$
(see Definition \ref{def:965} for details).

In the authors' previous work \cite{AHUY},
it was shown that $L$-complete 
null immersed hypersurfaces in $\R^{n+1}_1$ are totally geodesic.
As is clear from this
previous work, the study of 
global properties of such surfaces, 
considering only immersions is too restrictive.
In this paper, we shall investigate the global behavior of 
null hypersurfaces with singular points
 in $\R^{n+1}_1$. 

More precisely, instead of immersions, we shall introduce null
hypersurfaces as wave fronts (see Section 1).
It can be easily observed that
each hypersurface in the $n$-dimensional 
Euclidean space $\R^n_0$ ($n\ge 2$) induces
an associated parallel family,
and this family can be considered
as a section of a null hypersurface 
in $\R^{n+1}_1$.
For example, the light-cone 
\begin{equation}
\Lambda^n:=\Big\{(t,x_1,\ldots,x_n)\in \R^{n+1}_1\,;\, 
(x_1)^2+\cdots+(x_n)^2=t^2
\Big\}
\end{equation}
is a typical example of an $L$-complete null wave front,
which corresponds to the family of parallel hypersurfaces
of the unit sphere $S^{n-1}$ in $\R^n_0$
centered at the origin.
The origin as the cone-like singular point of $\Lambda^n$
corresponds to the parallel hypersurface of $S^{n-1}$
just shrinking to a point.
In general, 
a null hypersurface may have various singular points.
For example, for each $a\in (0,1]$, we consider an ellipse
\begin{equation}
\gamma(\theta):=(a \cos \theta,\, \sin \theta)\qquad 
(0\le \theta\le 2\pi),
\end{equation}
in the Euclidean plane. Let 
$\mb n(\theta)$ be the leftward unit
normal vector field along $\gamma$.
Then the map $F_a:\R\times (\R/2\pi\Z)\to \R^3_1$
defined by
\begin{equation}\label{eq:Fa}
F_a(t,\theta):=(0,\gamma(\theta))+t(1, \mb n(\theta)) \qquad (a\in (0,1])
\end{equation}
gives a null wave front having cuspidal edges and four swallowtails
whenever $0<a<1$ (the definitions of cuspidal edges and swallowtails
are given in \cite{KRSUY}). 
Each slice of the image of $F_a$ by the
horizontal plane $t=t_0$ corresponds to the parallel curve of 
the ellipse $\gamma$ of equi-distance $t_0$.
When $a=1$, the image of $F_1$ just 
coincides with the light-cone $\Lambda^2$ of $\R^3_1$.

\begin{figure}[htb]
 \begin{center}
\includegraphics[height=3.7cm]{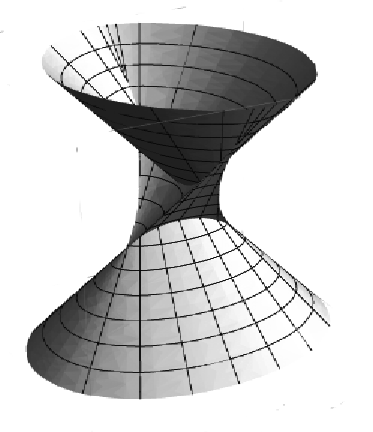}
 \end{center}
\caption{The complete null wave front associated with
the family of parallel curves
of the ellipse of $a=1/2$.}
\label{fig:F0}
\end{figure}

In Section~2, we prove the converse assertion 
(cf. Theorem~\ref{thm:D})
as a fundamental theorem
of null wave fronts, which states that
any $L$-complete null  wave fronts in $\R^{n+1}_1$
are  induced from wave fronts in $\R^n_0$.
As a consequence, we can say that an $L$-complete null wave front
is associated with the parallel family of a wave front in $\R^n_0$.

We give two applications of this fundamental theorem.
In Section 3,
we show a \lq\lq structure theorem of null hypersurfaces''
(cf. Theorem~\ref{thm:II})
which asserts that most of null wave fronts
(for example, real analytic null wave fronts and
$C^\infty$ null wave fronts with a certain genericity)
can be obtained as restrictions of $L$-complete null
wave fronts. 
There are several interesting geometric structures on
null hypersurfaces in $\R^{n+1}_1$ without assuming 
$L$-completeness (cf. \cite{DG, K}). 
The authors hope that
this theorem might play an important role 
in the further study of null hypersurfaces.

On the other hand, 
an $L$-complete null wave front in $\R^{n+1}_1$ 
is called {\it complete}
if its singular set is 
a non-empty compact subset in the domain of definition.	
For example, the light-cone $\Lambda^n$ in $\R^{n+1}_1$ 
and the null wave front
$F_a$ in $\R^3_1$
given in \eqref{eq:Fa} are complete.
In Section 4, as the deepest application of 
the fundamental theorem, we show
that each of complete null wave fronts corresponds to
a parallel family of a closed convex hypersurface in  $\R^n_0$ 
if $n\ge 3$ (cf. Theorem~\ref{ThmE}).  
When $n=2$,
we show that complete null wave fronts 
are induced by parallel families of
locally convex closed regular curves
in the Euclidean plane $\R^2_0$.
If the curve is an ellipse, the null wave front
corresponds to $F_a$ as above (see Figure \ref{fig:F0}). 

It should be remarked that 
the classical four vertex theorem for closed convex curves
implies the existence of
four non-cuspidal edge singular points
on complete null wave fronts with embedded ends in $\R^{3}_1$
(cf. Corollary~\ref{cor:V}).

There is one point to note when reading this paper:
Readers who are interested only in the fundamental theorem for 
$L$-complete null wave fronts
or complete null wave fronts can skip Section 3. 
In fact, Section 4 is devoted to properties of complete null
wave fronts and does not refer to Section 3. 
Appendix A  of this paper is required for Section~2, 
but Appendix B is used for Section 3, 
so such readers also do not need to read Appendix B.

\section{Properties of null wave fronts in
$\R^{n+1}_1$} \label{sec:1} 

We first recall the definition of wave fronts.
Let
$(\R^{n+1})^*$ be the dual vector space of $\R^{n+1}$, 
and we denote by
$
P^*(\R^{n+1})
$
the projective space associated with $(\R^{n+1})^*$.
We let
$$
\pi:(\R^{n+1})^*\setminus \{\mb 0\} \to P^*(\R^{n+1})
$$
be the canonical projection.
Then, for each element
$\zeta\in P^*(\R^{n+1})$,
there exists a linear function
$\omega:\R^{n+1}\to \R$
(which is an element of $(\R^{n+1})^*\setminus \{\mb 0\}$) 
such that $\pi(\omega)=\zeta$.
Since the kernel of
$\omega$
is invariant under non-zero scalar multiplications,
the $n$-dimensional subspace
$$
\op{Ker}(\zeta):=\op{Ker}(\omega)
$$
is well-defined.

In this paper, we set $r=\infty$ or $r=\omega$
and \lq\lq$C^r$'' means smoothness if $r=\infty$ and
real analyticity if $r=\omega$.
We fix a $C^r$-differentiable $n$-manifold $M^n$.

\begin{definition}
Let $F:M^{n}\to \R^{n+1}$ be a $C^r$-map.
Then the map $F$ is called
a 
{\it $C^r$-wave front} (or simply a {\it $C^r$-front})
if for each $p\in M^n$, there exist a neighborhood $U$ of $p$
and a $C^r$-map
$
\tilde \alpha:U\to (\R^{n+1})^*\setminus\{\mb 0\}
$
such that
\begin{enumerate}
\item 
$
\alpha:=\pi\circ \tilde \alpha
$
satisfies
$
(dF)_q(\mb v)\in \op{Ker}(\alpha_q)
\,\, (q\in U,\,\, \mb v \in T_qM^n)
$,
and
\item 
$
L:=(F,\alpha):U\to \R^{n+1}\times P^*(\R^{n+1})
$
is an immersion.
\end{enumerate}
Moreover, if we can take $U=M^n$, the map $F$ is said to be
{\it co-orientable}.
In this case, $\alpha:M^n\to P^*(\R^{n+1})$ is called the
{\it Gauss map} of $F$, 
and the map $\tilde \alpha:M^n\to (\R^{n+1})^*\setminus\{\mb 0\}$ 
is called the {\it lift} of $\alpha$.
\end{definition}

If a $C^r$-wave front
$F$ is not co-orientable, taking the double covering 
$\hat \pi:\hat M^n\to M^n$, the composition $F\circ \hat \pi$ becomes a
co-orientable wave front.
So without loss of generality, we may assume that
$F$ itself is co-orientable.

We let $\R^{n+1}_1$ be Lorentz-Minkowski $(n+1)$-space
of signature $(-+\cdots+)$,
and  denote by
$\inner{\,}{\,}$ 
the canonical Lorentzian inner product on $\R^{n+1}_1$.

\begin{proposition}\label{prop:F-Null}
Let $F:M^{n}\to \R^{n+1}_1$ be a 
$($co-orientable$)$ $C^r$-wave front.
Then there exists a vector field $\hat \xi$
without zeros
$($which can be considered as a map 
$\hat \xi:M^n\to \R^{n+1}_1\setminus\{\mb 0\})$ 
such that
$$
\inner{\hat \xi_p}{dF(\mb v)}=0
\qquad (p\in M^n,\,\, \mb v \in T_pM^n).
$$
\end{proposition}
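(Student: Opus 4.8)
The plan is to exploit the non-degeneracy of the Lorentzian inner product, which identifies $\R^{n+1}_1$ with its dual $(\R^{n+1})^*$. Since $F$ is a co-orientable $C^r$-wave front, its projective Gauss map $\alpha$ admits a $C^r$-lift $\tilde\alpha:M^n\to(\R^{n+1})^*\setminus\{\mb 0\}$ with $\alpha=\pi\circ\tilde\alpha$. By the definition of the projective Gauss map, $(dF)_p(\mb v)\in\op{Ker}(\alpha_p)=\op{Ker}(\tilde\alpha_p)$ for every $p\in M^n$ and every $\mb v\in T_pM^n$, that is, $\tilde\alpha_p\bigl((dF)_p(\mb v)\bigr)=0$. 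So the whole problem reduces to transporting this nowhere-zero conormal \emph{covector} field $\tilde\alpha$ to a nowhere-zero normal \emph{vector} field.

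To do this, I would introduce the musical isomorphism $\flat:\R^{n+1}_1\to(\R^{n+1})^*$ defined by $\mb w\mapsto\inner{\mb w}{\cdot}$. Because $\inner{\,}{\,}$ is non-degenerate, $\flat$ is a linear isomorphism; write $\sharp:=\flat^{-1}$ for its inverse. I would then simply set $\hat\xi_p:=\sharp(\tilde\alpha_p)$, i.e.\ the unique vector of $\R^{n+1}_1$ characterized by $\inner{\hat\xi_p}{\mb w}=\tilde\alpha_p(\mb w)$ for all $\mb w\in\R^{n+1}$. The three required properties are then immediate: since $\sharp$ is linear (hence $C^\omega$) and $\tilde\alpha$ is $C^r$, the map $p\mapsto\hat\xi_p=\sharp(\tilde\alpha_p)$ is $C^r$; since $\sharp$ is an isomorphism and $\tilde\alpha_p\ne\mb 0$, we have $\hat\xi_p\ne\mb 0$ for all $p$; and for $\mb v\in T_pM^n$ we get $\inner{\hat\xi_p}{(dF)_p(\mb v)}=\tilde\alpha_p\bigl((dF)_p(\mb v)\bigr)=0$ by the kernel condition.

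There is essentially no hard step here: the entire content is the observation that the ambient metric lets one raise the index of the conormal covector supplied by a wave front. The only point worth flagging is the role of co-orientability—it is precisely what guarantees the \emph{global} existence of the lift $\tilde\alpha$, and hence of a globally nowhere-zero $\hat\xi$. Without it one would obtain $\hat\xi$ only on the orientation double cover, exactly as anticipated in the remark following the definition of wave fronts; but since the proposition assumes co-orientability, this causes no difficulty.
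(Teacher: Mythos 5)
Your proposal is correct and coincides with the paper's own proof: the paper also takes the lift $\tilde\alpha$ of the projective Gauss map and uses the non-degeneracy of $\inner{\,}{\,}$ to define $\hat\xi_p$ by $\inner{\hat\xi_p}{\mb x}=\tilde\alpha_p(\mb x)$, which is exactly your musical isomorphism $\sharp(\tilde\alpha_p)$. You merely make the index-raising, smoothness, and non-vanishing steps more explicit than the paper does.
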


A vector field $\hat \xi$ along $F$ given in 
Proposition~\ref{prop:F-Null}
is called a {\it normal vector field} along $F$.

\begin{proof}
Let $\tilde \alpha$ be a lift of the Gauss map of $F$.
Since $\inner{\,}{\,}$  is non-degenerate,
there exists a vector field $\hat \xi$ on $M^n$ along $F$
such that
\begin{equation}\label{eq:HatXi}
\inner{\hat \xi_p}{\mb x}
=\tilde \alpha_p(\mb x) \qquad (\mb x\in T_{F(p)}\R^{n+1}_1).
\end{equation}
Since $\alpha_p \ne \mb 0$ for each $p$, 
the vector field $\hat \xi$ has no zeros on $M^n$.
\end{proof}

We then introduce \lq null wave fronts\rq\ as follows:

\begin{definition}\label{def:437}
In the setting of Proposition~\ref{prop:F-Null},
the $C^r$-wave front $F:M^n\to \R^{n+1}_1$ is said to 
be {\it null} or
{\it light-like} if
$\hat\xi_p$ points in the light-like direction in $\R^{n+1}_1$
at each $p\in M^n$.
\end{definition}

We denote by $(\,,\,)_E$
the canonical positive definite
inner product on $\R^{n+1}(=\R^{n+1}_1)$.

\begin{definition}[$E$-normalized normal vector field]
Let $F:M^n\to \R^{n+1}_1$
be a (co-orientable) null $C^r$-wave front.
A normal vector field $\hat \xi$ 
along $F$ is said to be
{\it $E$-normalized} if
$\hat \xi$ points in the future direction
and satisfies
$|\hat \xi|_E=\sqrt{2}$, where
$$
|\mb v|_E:=\sqrt{(\mb v,\mb v)_E} \qquad (\mb v\in \R^{n+1}_1).
$$
Since such a vector field $\hat \xi$ is uniquely determined,
we denote it by $\hat \xi_E$. 
\end{definition}

\begin{remark}
One can replace $\sqrt{2}$ with any positive constant $c$. However,
the choice of $c:=\sqrt{2}$ makes sense in the following reason:
As we will show in Theorem~\ref{prop:make},
a hypersurface $f:\Sigma^{n-1}\to \R^n_0$ in Euclidean space 
$\R^n_0$ with unit normal vector field $\nu$ induces 
null wave fronts $F_\pm:\R\times\Sigma^{n-1}\to \R^{n+1}_1$ 
whose $E$-normalized normal vector fields are given by
$\hat \xi:=(1,\pm \nu)$.
\end{remark}

We can always take the $E$-normalized normal vector field
for a given co-orientable null wave front $F$
as follows:
By Proposition~\ref{prop:F-Null},
we can take a normal vector field $\hat \xi$ along $F$.
Since $F$ is light-like, $\hat \xi$ gives a light-like vector field
along $F$. 
By replacing $\hat \xi$ by $-\hat \xi$, we may assume that
$\hat \xi$ points in the future direction, and
$$
\hat\xi_E:=\frac{\sqrt{2}}{|\hat \xi|_E}\hat \xi
$$
gives the desired vector field.

\begin{lemma}\label{lem:k-lemma0}
Let $F:M^n\to \R^{n+1}_1$
be a $($co-orientable$)$ null wave front.
Then 
\begin{equation}\label{eq:LF}
\mc L_F:=(F,\hat \xi_E):M^n\to \R^{n+1}_1\times \R^{n+1}_1
\end{equation}
is an immersion into $\R^{n+1}_1\times \R^{n+1}_1$.
\end{lemma}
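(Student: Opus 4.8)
The plan is to deduce that $L_F$ is an immersion directly from the defining property of a wave front, namely that $L=(F,\alpha)$ is an immersion, by relating the differential of the E-normalized normal field $\hat\xi_E$ to the differential of the projective Gauss map $\alpha$. The whole argument is essentially formal once the right identifications are in place; the target of $L_F$ in the statement should of course read $\R^{n+1}_1\times\R^{n+1}_1$.

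First I would record the natural lift attached to $\hat\xi_E$. Let $\Phi:\R^{n+1}_1\to(\R^{n+1})^*$ be the musical isomorphism $\Phi(\mb w)=\inner{\mb w}{\cdot}$, which is a linear isomorphism precisely because $\inner{\,}{\,}$ is non-degenerate. By \eqref{eq:HatXi} the original normal field satisfies $\tilde\alpha=\Phi\circ\hat\xi$, and since $\hat\xi_E$ is a nowhere-zero scalar rescaling of $\hat\xi$, the map $\tilde\alpha_E:=\Phi\circ\hat\xi_E$ is again a $C^r$-lift of the projective Gauss map, i.e. $\alpha=\pi\circ\tilde\alpha_E$. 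As $\Phi$ is a constant linear map, its differential is $\Phi$ itself, so for every $p\in M^n$ and $\mb v\in T_pM^n$ one has $d(\tilde\alpha_E)_p(\mb v)=\Phi\bigl(d(\hat\xi_E)_p(\mb v)\bigr)$; in particular $d(\hat\xi_E)_p(\mb v)=\mb 0$ forces $d(\tilde\alpha_E)_p(\mb v)=\mb 0$.

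Next I would analyze the projection $\pi$. At a point $\omega\in(\R^{n+1})^*\setminus\{\mb 0\}$ the fibre of $\pi$ is the punctured scalar line $\R\omega\setminus\{\mb 0\}$, so $\ker(d\pi_\omega)=\R\omega$ and $T_{\pi(\omega)}P^*(\R^{n+1})\cong(\R^{n+1})^*/\R\omega$. Consequently $d\alpha_p=d\pi_{\tilde\alpha_{E,p}}\circ d(\tilde\alpha_E)_p$, and the previous step yields the implication: if $d(\hat\xi_E)_p(\mb v)=\mb 0$ then $d(\tilde\alpha_E)_p(\mb v)=\mb 0$, hence $d\alpha_p(\mb v)=\mb 0$. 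Now suppose $d(L_F)_p(\mb v)=\mb 0$ for some $\mb v$. Then both components vanish, $dF_p(\mb v)=\mb 0$ and $d(\hat\xi_E)_p(\mb v)=\mb 0$, and by the above the latter gives $d\alpha_p(\mb v)=\mb 0$. Therefore $dL_p(\mb v)=(dF_p(\mb v),d\alpha_p(\mb v))=\mb 0$, and since $F$ is a wave front, $L=(F,\alpha)$ is an immersion, forcing $\mb v=\mb 0$. Thus $(dL_F)_p$ is injective at every $p$, so $L_F$ is an immersion.

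I expect the only genuinely delicate point to be the identification $\ker(d\pi_\omega)=\R\omega$, which is what licenses passing from the vanishing of $d(\tilde\alpha_E)_p(\mb v)$ to that of $d\alpha_p(\mb v)$; everything else is a chain-rule manipulation. It is worth emphasizing that the argument only uses the one-way implication ``$L$ immersion $\Rightarrow L_F$ immersion,'' and this works because the second slot of $L_F$ carries the honest vector field $\hat\xi_E$ rather than its projective class: had we instead tried to recover $L_F$ from $\alpha$ alone, the reverse direction would require controlling the extra scalar freedom in $\ker(d\pi)$, which is unnecessary here since we assume $d(\hat\xi_E)_p(\mb v)=\mb 0$ outright.
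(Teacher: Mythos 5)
Your proof is correct. It rests on the same basic identification as the paper's, namely that via the Lorentzian metric the projective Gauss map factors as $\alpha=\pi\circ\Phi\circ\hat\xi_E$, but the logical route differs. The paper observes that the map $S^n(\sqrt{2})\ni\mb v\mapsto\pi(\inner{\mb v}{*})\in P^*(\R^{n+1})$, which is exactly your $\pi\circ\Phi$ restricted to the sphere in which $\hat\xi_E$ takes its values, is a double covering, hence a local diffeomorphism; composing the second component with it therefore changes nothing, and the paper obtains the two-sided statement that $L_F$ is an immersion \emph{if and only if} $(F,\alpha)$ is. You instead run a bare chain-rule argument in one direction: any kernel vector of $d(L_F)_p$ has both $dF_p(\mb v)=\mb 0$ and $d(\hat\xi_E)_p(\mb v)=\mb 0$, hence is a kernel vector of $dL_p$, which the wave front hypothesis kills. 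Your version is more elementary (no covering-space input at all), and, as you say, one direction suffices for the lemma as stated. What the paper's stronger formulation buys is reusability: the Proposition immediately following (the characterization of null wave fronts) needs the \emph{converse} implication and cites ``the same reason as in the proof'' of this lemma; that direction genuinely requires the local-diffeomorphism property of the covering, since one must pass from $d\alpha_p(\mb v)=\mb 0$ back to $d(\hat\xi_E)_p(\mb v)=\mb 0$. One small mis-emphasis in your write-up: the identification $\ker(d\pi_\omega)=\R\omega$ is not what licenses your step from $d(\tilde\alpha_E)_p(\mb v)=\mb 0$ to $d\alpha_p(\mb v)=\mb 0$ --- that is pure chain rule, since $d\pi$ of the zero vector is zero; the kernel computation becomes relevant only for the converse direction that you correctly note you can avoid.
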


We call this $\mc L_F:M^n\to \R^{n+1}_1\times \R^{n+1}_1$
the {\it canonical lift} of $F$.

\begin{proof}
We denote by  $S^n(\sqrt{2})$ the sphere of radius $\sqrt{2}$ centered at the origin
in $\R^{n+1}_1$ with respect to the metric $(\,,\,)_E$.
Since
$$
S^n(\sqrt{2})\ni \mb v \mapsto \pi(\inner{\mb v}{*})\in P^*(\R^{n+1})
$$
gives the double covering of $P^*(\R^{n+1})$, 
it can be easily observed that $\mc L_F$ is an immersion into 
$\R^{n+1}_1\times S^n(\sqrt{2})$
if and only if $(F,\pi\circ \tilde \alpha):M^n\to \R^{n+1}\times P^*(\R^{n+1})$
is an immersion, where $\tilde\alpha$
is the map $\tilde \alpha:M^n\to (\R^{n+1})^*\setminus\{\mb 0\}$ 
induced by $\hat \xi$ given in \eqref{eq:HatXi}.
\end{proof}

The following assertion gives
a characterization of null wave fronts:

\begin{proposition}\label{prop:536}
Let $F:M^n\to \R^{n+1}_1$ be a $C^r$-map.
Then
$F$ is a $($co-orientable$)$ null wave front
if and only if there exists a vector field
$\hat \xi$ along $F$ defined on $M^n$ such that 
\begin{enumerate}
\item 
$(\hat \xi,\hat \xi)_E=2$,
\item
$\hat \xi$ is pointing in the future light-like direction, and
\item $(F,\hat \xi)$ 
is an immersion into $\R^{n+1}_1\times \R^{n+1}_1$
satisfying $\langle \hat \xi_p,dF_p(\mb v)\rangle=0$
for each $\mb v\in T_pM^n$ $(p\in M^{n})$.
\end{enumerate}
\end{proposition}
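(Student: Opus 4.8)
The plan is to prove both implications by reversing the constructions already set up, reusing in particular the double-covering computation from the proof of Lemma \ref{lem:k-lemma0}. Each direction amounts to unwinding definitions, with the one genuinely nontrivial point being the passage between the various immersion conditions.

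For the necessity, suppose $F$ is a (co-orientable) null wave front. I would simply take $\hat \xi := \hat \xi_E$ to be the E-normalized normal vector field constructed immediately before Lemma \ref{lem:k-lemma0}. By its very definition $|\hat \xi_E|_E=\sqrt{2}$, so that $(\hat \xi_E,\hat \xi_E)_E=2$, which is (1); since $F$ is null, $\hat \xi_E$ points in the light-like direction, which is (2); and the defining property of a normal vector field gives the orthogonality $\inner{\hat \xi_p}{dF_p(\mb v)}=0$, while Lemma \ref{lem:k-lemma0} asserts precisely that $L_F=(F,\hat \xi_E)$ is an immersion into $\R^{n+1}_1\times \R^{n+1}_1$. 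Together these yield (3).

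For the sufficiency, suppose a vector field $\hat \xi$ satisfying (1)--(3) is given. I would first define $\tilde \alpha:M^n\to (\R^{n+1})^*$ by $\tilde \alpha_p(\mb x):=\inner{\hat \xi_p}{\mb x}$, exactly as in \eqref{eq:HatXi}. Condition (1) forces $\hat \xi_p\ne \mb 0$, and since $\inner{\,}{\,}$ is non-degenerate, $\tilde \alpha_p\ne \mb 0$ for every $p$; hence $\alpha:=\pi\circ \tilde \alpha:M^n\to P^*(\R^{n+1})$ is well-defined and $F$ is co-orientable with lift $\tilde \alpha$. The orthogonality in (3) says exactly that $dF_p(\mb v)\in \op{Ker}(\tilde \alpha_p)=\op{Ker}(\alpha_p)$, so $F$ is a $C^r$-frontal with projective Gauss map $\alpha$. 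Finally, condition (2) makes the normal direction light-like, so $F$ will be null as soon as it is known to be a front.

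It remains to verify that $L=(F,\alpha)$ is an immersion, which is the heart of the matter, and here I would invoke the double-covering argument from the proof of Lemma \ref{lem:k-lemma0}. By (1), the map $\hat \xi$ takes values in the sphere $S^n(\sqrt{2})$, so $(F,\hat \xi)$ being an immersion into $\R^{n+1}_1\times \R^{n+1}_1$---which is the content of (3)---is equivalent to its being an immersion into the submanifold $\R^{n+1}_1\times S^n(\sqrt{2})$. Since $S^n(\sqrt{2})\ni \mb v\mapsto \pi(\inner{\mb v}{*})\in P^*(\R^{n+1})$ is a double covering, hence a local diffeomorphism, composition with it preserves the immersion property; thus $(F,\hat \xi)$ is an immersion into $\R^{n+1}_1\times S^n(\sqrt{2})$ if and only if $(F,\alpha)=(F,\pi\circ\tilde\alpha)$ is an immersion into $\R^{n+1}\times P^*(\R^{n+1})$. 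Therefore $F$ is a wave front, and by (2) a null one. I expect the only delicate point to be precisely this chain of equivalences between the three immersion conditions---into the full product, into the sphere factor, and into the projective target---while the rest is a direct translation between $\hat \xi$ and the lift $\tilde\alpha$ of the projective Gauss map.
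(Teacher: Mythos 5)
Your proof is correct and follows essentially the same route as the paper: necessity via the E-normalized normal vector field and Lemma \ref{lem:k-lemma0}, and sufficiency by defining $\tilde\alpha$ through \eqref{eq:HatXi} and transferring the immersion property along the double covering $S^n(\sqrt{2})\to P^*(\R^{n+1})$. You merely spell out more explicitly (frontal property, co-orientability, the chain of equivalences among the immersion conditions) what the paper compresses into a reference to ``the same reason as in the proof of Lemma \ref{lem:k-lemma0}.''
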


\begin{proof}
We have already seen that any null wave front in  $\R^{n+1}_1$
uniquely induces
an $E$-normalized normal vector field $\hat \xi_E$.
So it is sufficient to show the converse.
Suppose that
$\hat \xi$ is a vector field
along $F$ satisfying the above three conditions.
We let
$\tilde \alpha:M^n\to (\R^{n+1})^*\setminus\{\mb 0\}$ 
be the map given in 
\eqref{eq:HatXi}.
By (1) and (3), 
$(F,\pi\circ \tilde \alpha):M^n\to \R^{n+1}\times P^*(\R^{n+1})$
is an immersion by the same reason
as in the proof of
Lemma~\ref{lem:k-lemma0}.
\end{proof}

\begin{lemma}\label{lem:xi-M^n}
Let $F:M^n\to \R^{n+1}_1$ be a null $C^r$-immersion
with a normal vector field $\hat \xi$
on $M^n$ as in Proposition \ref{prop:536}.
Then for each $p\in M^n$,
there exists a unique tangent vector $\xi_p\in T_pM^n$
such that $dF(\xi_p)=\hat \xi_p$.
Moreover, the correspondence $p \mapsto \xi_p$
is $C^r$-differentiable.
\end{lemma}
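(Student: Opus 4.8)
The plan is to reduce the existence and uniqueness of $\xi_p$ to a fiberwise computation in $\R^{n+1}_1$, and then to deduce the $C^r$-dependence on $p$ from a local frame argument.

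First I would fix $p\in M^n$ and set $V:=dF_p(T_pM^n)$. Since $F$ is an immersion, $dF_p$ is injective and $\dim V=n$; because $\inner{\,}{\,}$ is nondegenerate on $\R^{n+1}_1$, the orthogonal complement $V^{\perp}$ satisfies $\dim V+\dim V^{\perp}=n+1$, so $\dim V^{\perp}=1$. By \eqref{eq:HatXi} we have $\inner{\hat\xi_p}{dF_p(\mb v)}=0$ for every $\mb v\in T_pM^n$, hence $\hat\xi_p\in V^{\perp}$; and as $\hat\xi_p$ has no zeros it spans $V^{\perp}$, giving $V^{\perp}=\R\hat\xi_p$.

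The key point is that $\hat\xi_p$ itself lies in $V$. Since $\hat\xi_p$ is light-like, $\inner{\hat\xi_p}{\hat\xi_p}=0$, so $\hat\xi_p$ is orthogonal to $V^{\perp}=\R\hat\xi_p$; therefore $\hat\xi_p\in (V^{\perp})^{\perp}=V$, using that double orthogonal complements recover the original subspace in the nondegenerate space $\R^{n+1}_1$. Consequently there exists $\xi_p\in T_pM^n$ with $dF_p(\xi_p)=\hat\xi_p$, and it is unique by the injectivity of $dF_p$. For the $C^r$-dependence I would work locally: near $p$ choose a $C^r$-frame $e_1,\dots,e_n$ of $TM^n$ together with a constant vector $N\in\R^{n+1}_1$ transversal to $dF_p(T_pM^n)$, so that $\{dF(e_1),\dots,dF(e_n),N\}$ is a $C^r$-frame of $\R^{n+1}_1$ on a neighborhood of $p$. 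Expanding $\hat\xi=\sum_{i} c_i\,dF(e_i)+c_{n+1}N$ and solving by Cramer's rule expresses each coefficient as a ratio of $C^r$-determinants with nonvanishing denominator, so the $c_i$ are $C^r$. The fiberwise step forces $c_{n+1}\equiv 0$, whence $\xi:=\sum_i c_i\,e_i$ is the desired $C^r$-vector field with $dF(\xi)=\hat\xi$.

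The only real obstacle is the key point—showing that the null normal is in fact tangent. I resolve it through the double-orthogonal-complement identity together with the light-likeness of $\hat\xi$; equivalently, one may argue that if $\hat\xi_p\notin V$ then $V\cap V^{\perp}=\{\mb 0\}$, so the induced bilinear form $\inner{\,}{\,}|_V$ would be nondegenerate, contradicting the assumption that $F$ is a null immersion (whose induced metric degenerates at every point). Either route delivers $\hat\xi_p\in V$, after which existence, uniqueness, and smoothness are routine.
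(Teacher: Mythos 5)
Your proof is correct and takes essentially the same route as the paper's: both identify $dF_p(T_pM^n)$ with the orthogonal complement of $\hat\xi_p$ by a dimension count and then use the light-likeness $\inner{\hat\xi_p}{\hat\xi_p}=0$ to conclude $\hat\xi_p\in dF_p(T_pM^n)$, with uniqueness from injectivity of $dF_p$. Your local-frame/Cramer's-rule argument simply spells out the smooth dependence that the paper asserts in a single sentence, so no further comparison is needed.
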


\begin{proof}
Since $dF_p(T_pM^n)$ is an 
$n$-dimensional vector subspace of $T_{F(p)}\R^{n+1}_1$
at each $p\in M^n$, it holds that
$$
dF_p(T_pM^n)=\Big\{
\hat{\mb v}\in \R^{n+1}_1\,;\, \inner{\hat {\mb v}}{\hat \xi_p}=0\Big\}.
$$
Since $\hat \xi_p\in dF_p(T_pM^n)$ and
$dF_p$ is injective,
the desired vector $\xi_p$ should be
$$
\xi_p:=(dF_p)^{-1}(\hat \xi_p)\qquad (p\in M^n).
$$
Since $(dF_p)^{-1}(\hat \xi_p)$
depends smoothly on $p$,
we obtain the assertion.
\end{proof}

Later, we will show that the assertion of Lemma~\ref{lem:xi-M^n} is extended
for null $C^r$-wave fronts
 (cf. Theorem~\ref{prop:10}).
The next assertion is 
a weak version of Lemma~\ref{lem:xi-M^n}
for null wave fronts.

\begin{proposition}\label{prop:k-lemma-m1}
Let $F:M^n\to \R^{n+1}_1$
be a 
$($co-orientable$)$ null $C^r$-wave front
and 
$\hat \xi$ 
a normal vector field  
along $F$.
Then, for each $p\in M^n$,
there exists a tangent vector $\vect{v}\in T_pM^n$
such that $(dF)_p(\vect{v})=\hat \xi_p$.
\end{proposition}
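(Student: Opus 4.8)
The plan is to reduce to the $E$-normalized normal and then settle the claim pointwise, the only real work being at singular points. Since $\hat\xi$ and the $E$-normalized field $\hat\xi_E$ both arise from lifts of the projective Gauss map of $F$, they differ by a nowhere-zero scalar function; hence $\hat\xi_p\in dF_p(T_pM^n)$ if and only if $\hat\xi_{E,p}\in dF_p(T_pM^n)$, and we may assume $\hat\xi=\hat\xi_E$. Write $V:=dF_p(T_pM^n)$ and $k:=\dim\ker dF_p$. Where $F$ is an immersion, $k=0$ and $V=\hat\xi_p^\perp$, so the assertion is exactly Lemma \ref{lem:xi-M^n}; the content lies entirely at singular points, where $k\ge 1$ and $V$ is a \emph{proper} subspace of the degenerate hyperplane $\hat\xi_p^\perp$. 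A naive limiting argument from nearby regular points will not work, because $dF_p(T_pM^n)$ may drop dimension at $p$, so a direct argument at $p$ is required.

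The key construction is the symmetric bilinear form $\beta$ on $T_pM^n$ given by $\beta(\mb u,\mb v):=\inner{dF_p(\mb u)}{d\hat\xi_p(\mb v)}$. Its symmetry follows by differentiating the normality relation $\inner{\hat\xi}{dF(\partial_{x_i})}=0$ in a local coordinate $x_j$ and using $F_{x_ix_j}=F_{x_jx_i}$, which gives $\inner{F_{x_i}}{\hat\xi_{x_j}}=\inner{F_{x_j}}{\hat\xi_{x_i}}$. Consequently, for every $\mb v_0\in\ker dF_p$ we get $\inner{dF_p(\mb u)}{d\hat\xi_p(\mb v_0)}=\beta(\mb v_0,\mb u)=0$ for all $\mb u$, so that $d\hat\xi_p(\mb v_0)$ lies in the Lorentzian orthogonal complement $V^\perp$.

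The main argument is then a dimension count with the non-degenerate ambient form, for which $\dim V^\perp=(n+1)-\dim V=k+1$. First I would use that $F$ is a wave front: by Lemma \ref{lem:k-lemma0} the Legendrian lift $L_F=(F,\hat\xi_E)$ is an immersion, so the map $\ker dF_p\to V^\perp$, $\mb v_0\mapsto d\hat\xi_p(\mb v_0)$, is injective (if $dF_p(\mb v_0)$ and $d\hat\xi_p(\mb v_0)$ both vanished, then $dL_F(\mb v_0)=0$), and its image $d\hat\xi_p(\ker dF_p)$ has dimension $k$. Next, $\hat\xi_p\in V^\perp$ by normality, while $\hat\xi_p\notin d\hat\xi_p(\ker dF_p)$: differentiating $(\hat\xi_E,\hat\xi_E)_E\equiv 2$ yields $(\hat\xi_p,d\hat\xi_p(\mb v))_E=0$ for all $\mb v$, so if $\hat\xi_p$ lay in the image of $d\hat\xi_p$ it would be $E$-orthogonal to itself, contradicting $|\hat\xi_p|_E^2=2$. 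By dimension we conclude $V^\perp=\R\hat\xi_p\oplus d\hat\xi_p(\ker dF_p)$. Finally, $\hat\xi_p$ is Lorentz-orthogonal to all of $V^\perp$: to $\hat\xi_p$ itself because it is null, and to each $d\hat\xi_p(\mb v_0)$ because differentiating $\inner{\hat\xi}{\hat\xi}=0$ gives $\inner{\hat\xi_p}{d\hat\xi_p(\mb v)}=0$. Non-degeneracy of the ambient form then forces $\hat\xi_p\in(V^\perp)^\perp=V=dF_p(T_pM^n)$, as desired.

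The main obstacle is the degeneracy of the Lorentzian metric on $\hat\xi_p^\perp$ at singular points: because the ambient space is $(n+1)$-dimensional, $V^\perp$ carries one extra dimension compared with the analogous Euclidean bookkeeping, and this extra dimension is precisely the null line $\R\hat\xi_p$. What makes the count close is the $E$-normalization: the constancy $|\hat\xi_E|_E^2=2$ is exactly what separates $\R\hat\xi_p$ from $d\hat\xi_p(\ker dF_p)$ and excludes the degenerate possibility $\hat\xi_p\in\operatorname{image}(d\hat\xi_p)$. An alternative and perhaps more transparent route is to pass to the description $\hat\xi_E=(1,\nu)$ with $\nu$ a unit vector in $\R^n$ and to run the same orthogonality-and-dimension argument with the positive definite $(\,,\,)_E$, where the bookkeeping is immediate.
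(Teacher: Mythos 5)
Your proof is correct, and its computational core coincides with the paper's: the subspace $W:=d\hat\xi_E(\ker dF_p)$, the fact that $\dim W=\dim \ker dF_p$ because the Legendrian lift $(F,\hat\xi_E)$ is an immersion, the orthogonality of $W$ to $V:=dF_p(T_pM^n)$ via symmetry of second derivatives, the exclusion $\hat\xi_p\notin W$ from $|\hat\xi_E|_E=\sqrt2$, and the relations that $\hat\xi_p$ is Lorentz-orthogonal to itself and to $W$ --- these are precisely the paper's Lemma \ref{lem:VW} together with the first half of its proof of the proposition. Where you genuinely diverge is the linear-algebraic endgame. The paper argues by contradiction: assuming $\hat\xi_p\notin V$, it sets $N:=\R\hat\xi_p$, invokes the appendix Lemma \ref{prop:VWL} (whose proof makes a case distinction on whether $W$ is degenerate and uses O'Neill's facts on time-like subspaces) to get $V\cap W=\{\mb 0\}$, and then concludes $V+W+N=\R^{n+1}_1$, so that $\hat\xi_p$, being perpendicular to all three summands, must vanish. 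You instead work unconditionally inside $V^\perp$: non-degeneracy of the ambient form gives $\dim V^\perp=(n+1)-\dim V=k+1$, your count yields $V^\perp=\R\hat\xi_p\oplus W$, and then $\hat\xi_p\perp V^\perp$ forces $\hat\xi_p\in (V^\perp)^\perp=V$. This buys two things. First, you need only the standard duality facts $\dim V^\perp=(n+1)-\dim V$ and $(V^\perp)^\perp=V$, which hold for arbitrary (possibly degenerate) subspaces of a non-degenerate space, so the appendix lemma and its degenerate/time-like case analysis disappear. Second, you sidestep a step the paper glosses over: pairwise trivial intersections of $V$, $W$, $N$ do not by themselves give $\dim(V+W+N)=n+1$; making the paper's sum direct requires the extra observation that two Lorentz-orthogonal null vectors are proportional, which the paper never states. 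A further small merit is that you explicitly reduce a general normal field $\hat\xi$ to $\hat\xi_E$ (pointwise proportional, both being induced by lifts of the same projective Gauss map), whereas the paper states the proposition for $\hat\xi$ but silently proves it only for $\hat\xi_E$.
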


\begin{proof}
We may assume that $\hat \xi=\hat \xi_E$.
If $F$ gives an immersion at $p$, 
then the assertion
follows from Lemma~\ref{lem:xi-M^n}.
So we may assume that
$p$ is a singular point $($i.e. is a point where 
$F$ is not an immersion$)$ of $F$.
We can take a local coordinate system $(u_1,\ldots,u_n)$
of $M^n$ centered at $p$ such that
the family of vectors
$$
(\partial_{u_1})_p\,\,,\ldots,\,\,(\partial_{u_r})_p \qquad (r>0)
$$
spans the kernel of $(dF)_p$,
where 
$\partial_{u_j}:=\partial/\partial u_j$
($j=1,\ldots,n$).
By setting
$$
F_{u_{i}}:=dF(\partial_{u_i})\qquad (i=1,\ldots,n),
$$
the vectors
$
F_{u_{r+1}}(p)\,\,,\ldots,\,\,F_{u_{n}}(p)
$
are linearly independent.
We denote by
$V$ the subspace of $\R^{n+1}_1$ spanned by these vectors.
We set
\begin{equation}\label{eq:eta}
\hat \eta_i^{}:=d\hat \xi_E(\partial_{u_i}) \qquad (i=1,\ldots, n).
\end{equation}
{If we think $F$ and $\hat \xi_E$ are 
column vector-valued functions, 
we can consider}
the $(2n+2)\times n$-matrix
\begin{equation}\label{eq:M0}
M_0:=\pmt{
F_{u_{1}}            & \cdots & F_{u_{r}}              & F_{u_{r+1}} & \cdots & F_{u_{n}} \\
\hat \eta_1^{} & \cdots   & \hat \eta_r^{} & \hat \eta_{r+1}^{} & \cdots 
& 
\hat \eta_n^{}
}
\end{equation}
at $p$ as the Jacobi matrix of the map
$(F,\hat \xi_E)$ of $M^n$ into $\R^{n+1}_1\times \R^{n+1}_1$,
which can be computed as
$$
M_0=\pmt{
\mb 0            & \cdots & \mb 0              & F_{u_{r+1}}(p) & \cdots & F_{u_{n}}(p) \\
\hat \eta_1^{}(p) & \cdots   & \hat \eta_r^{}(p) & \hat \eta_{r+1}^{}(p) & \cdots 
& \hat \eta_n^{}(p) 
}.
$$
Since $F$ is a wave front,
the matrix $M_0$ is of rank $n$
and so 
$$
\hat \eta_1^{}(p)\,\,,\ldots,\,\, \hat \eta_r^{}(p)
$$
are linearly independent at $p$, which gives  
a basis of the vector space defined by
\begin{equation}\label{eq:sp-W}
W:=\left\{\sum_{i=1}^r a_i \hat \eta_i^{}(p)\,;\, a_1,\ldots,a_r\in \R\right\}.
\end{equation}
Suppose $(\hat \xi_E)_p$ does not belong to $V$.
We set $N:=\R(\hat \xi_E)_p$, that is,
it is the 1-dimensional vector space generated by $(\hat \xi_E)_p$.
By the following Lemma~\ref{lem:VW},
$W\cap N=\{\mb 0\}$ holds, and
$V$, $W$ and $N$ satisfy the assumption of
Lemma~\ref{prop:VWL} in Appendix A.
So we have $W\cap V=\{\mb 0\}$. Since $W$ is of dimension $r$
and $V$ is of dimension $n-r$, 
the direct sum $V+W+N$ coincides with $\R^{n+1}_1$. 
Then $(\hat \xi_E)_p$ vanishes 
because $(\hat \xi_E)_p$ is perpendicular to $V,W$ and $N$,
a contradiction. Thus, we can find a vector $\vect{v}\in T_pM^n$
such that $(\hat \xi_E)_p=dF_p(\vect{v})\in V$.
\end{proof}

\begin{lemma}\label{lem:VW}
The vector space $W$ given in
\eqref{eq:sp-W}
is perpendicular to $dF_p(T_pM^n)$
in $\R^{n+1}_1$.
Moreover, $(\hat \xi_E)_p$ does not belong to $W$.
\end{lemma}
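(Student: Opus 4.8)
The plan is to establish both assertions by differentiating the two defining identities of the E-normalized normal vector field $\hat\xi_E$: the Lorentzian normality $\inner{\hat\xi_E}{dF(\mb v)}=0$ for the perpendicularity claim, and the Euclidean normalization $(\hat\xi_E,\hat\xi_E)_E=2$ for the non-membership claim. The key structural input is that $F_{u_i}(p)=\mb 0$ for $i\le r$, since $(\partial_{u_1})_p,\ldots,(\partial_{u_r})_p$ span $\op{Ker}(dF_p)$.

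For the perpendicularity of $W$, first note that $\inner{\hat\xi_E}{F_{u_j}}=0$ holds identically on $M^n$ for each $j$, because $\hat\xi_E$ is a normal vector field along $F$. Differentiating this identity with respect to $u_i$ and using $\partial_{u_i}\hat\xi_E=\hat\eta_i$ together with $\partial_{u_i}F_{u_j}=F_{u_iu_j}$ gives
$$
\inner{\hat\eta_i}{F_{u_j}}+\inner{\hat\xi_E}{F_{u_iu_j}}=0.
$$
Since $F_{u_iu_j}=F_{u_ju_i}$, the second term is symmetric in $i$ and $j$, so $\inner{\hat\eta_i}{F_{u_j}}=\inner{\hat\eta_j}{F_{u_i}}$. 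Now evaluate at $p$ with $i\le r$: if $j\le r$ then $F_{u_j}(p)=\mb 0$ and the pairing vanishes, while if $j>r$ the symmetry yields $\inner{\hat\eta_i(p)}{F_{u_j}(p)}=\inner{\hat\eta_j(p)}{F_{u_i}(p)}=0$ because $F_{u_i}(p)=\mb 0$. Hence each spanning vector $\hat\eta_i(p)$ $(i\le r)$ of $W$ is Lorentz-perpendicular to every $F_{u_j}(p)$, i.e. to all of $dF_p(T_pM^n)$, which is the first assertion.

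For the second assertion I would switch to the Euclidean inner product, since pairing $\hat\xi_E$ Lorentz-orthogonally against itself yields nothing (it is null, so $\inner{\hat\xi_E}{\hat\xi_E}=0$ gives no contradiction). Differentiating the normalization $(\hat\xi_E,\hat\xi_E)_E=2$ with respect to $u_i$ gives $(\hat\eta_i,\hat\xi_E)_E=0$, so every element of $W$ is E-orthogonal to $(\hat\xi_E)_p$. If $(\hat\xi_E)_p\in W$ were expressible as $\sum_{i=1}^r a_i\hat\eta_i(p)$, then
$$
(\hat\xi_E,\hat\xi_E)_E=\sum_{i=1}^r a_i\,(\hat\eta_i,\hat\xi_E)_E=0,
$$
contradicting $(\hat\xi_E,\hat\xi_E)_E=2$. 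Therefore $(\hat\xi_E)_p\notin W$.

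The computations are routine once set up; the only point requiring care is selecting the correct inner product for each claim. The Lorentzian pairing is forced for the perpendicularity statement, since that is precisely what $W\perp dF_p(T_pM^n)$ means, but it is useless for the non-membership statement because $\hat\xi_E$ is null; there the Euclidean normalization supplies the needed nonzero quantity. The only mild subtlety is exploiting the symmetry of the mixed second derivatives together with the vanishing $F_{u_i}(p)=\mb 0$ $(i\le r)$ to dispatch the cross terms with $j>r$.
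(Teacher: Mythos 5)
Your proposal is correct and follows essentially the same route as the paper: both parts rest on differentiating the Lorentzian normality $\inner{\hat\xi_E}{F_{u_j}}=0$, using symmetry of second derivatives together with $F_{u_i}(p)=\mb 0$ $(i\le r)$ for the perpendicularity, and on the Euclidean normalization $(\hat\xi_E,\hat\xi_E)_E=2$ to rule out $(\hat\xi_E)_p\in W$. Your write-up even makes explicit a step the paper leaves implicit, namely that $(\hat\eta_i,\hat\xi_E)_E=0$ follows from differentiating the normalization.
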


\begin{proof}
As in 
the proof of Proposition~\ref{prop:k-lemma-m1},
we denote by
$V$ the subspace of $\R^{n+1}_1$ spanned by
$F_{u_{r+1}}(p)\,\,,\ldots,\,\,F_{u_{n}}(p)$.
Using the notation
\eqref{eq:eta},
for each $i\in \{1,\ldots,r\}$
and $j\in \{r+1,\ldots,n\}$,
we have
\begin{align*}
\inner{F_{u_j}}{\hat \eta_i^{}} 
&={\inner{F_{u_j}}{\hat \xi_E}}_{u_i}-\inner{F_{u_ju_i}}{\hat \xi_E} 
=-\inner{F_{u_ju_i}}{\hat \xi_E}
\\
&=-\inner{F_{u_i}}{\hat \xi_E}_{u_j}+\inner{F_{u_i}}{d\hat \xi_E(\partial_{u_j})}
=\inner{F_{u_i}}{\hat \eta_j^{}}=0 
\end{align*}
at $p$, 
where we used the fact $F_{u_i}(p)=\mb 0$.
By this computation, we have that
\begin{equation}\label{eq:vw}
\inner{v}{w}=0 \qquad (v\in V,\,\, w\in W),
\end{equation}
proving the first assertion.

We  prove the second assertion.
If $(\hat \xi_E)_p\in W$, then we can write
$
\hat \xi_E=\sum_{i=1}^r a_i(\hat \xi_E)_{u_i}
$
at $p$.
Then it holds that
$$
2=\Big((\hat \xi_E)_p,(\hat \xi_E)_p\Big)_E
=\sum_{i=1}^r 
a_i\Big(\big((\hat \xi_E)_{u_i}\big)_p,\big(\hat \xi_E\big)_p\Big )_E=0,
$$
a contradiction.
\end{proof}

We next prepare the following:

\begin{proposition} \label{lem:Fd}
Let $F:M^n\to \R^{n+1}_1$ be a null $C^r$-wave front
and $\hat \xi_E$ its associated $E$-normalized normal vector field. 
Suppose that $p\in M^n$ is a singular point of $F$.
Then 
\begin{enumerate}
\item 
for each $\delta\in \R$,
\begin{equation}\label{eq:FDelata}
F_\delta:=F+\delta\, \hat \xi_E 
\end{equation}
is a null wave front defined on $M^n$, and
\item 
there exists a positive number $\epsilon_0$ such that 
$F_\delta$ $(0<|\delta|<\epsilon_0)$ is an immersion at $p$.
\end{enumerate}
\end{proposition}

\begin{remark}
Later, we will show that
the image of $F_\delta$ coincides with that of
$F$ under a suitable assumption of $F$
(see Corollary \ref{cor:condC}).
\end{remark}

\begin{proof}
Since $\hat \xi_E$ is a light-like vector field, it is
a normal vector field of $F_\delta$.
We set
$$
M_\delta:=\pmt{
(F_\delta)_{u_{1}} & \cdots & (F_\delta)_{u_{r}} 
& (F_\delta)_{u_{r+1}} & \cdots & (F_\delta)_{u_{n}} \\
(\hat \xi_E)_{u_{1}} & \cdots & (\hat \xi_E)_{u_{r}} & 
(\hat \xi_E)_{u_{r+1}} & \cdots & (\hat \xi_E)_{u_{n}}} 
$$
for $\delta\in \R$.
Since $M_\delta$ ($\delta\ne 0$)
is obtained 
by adding the second row to the first row of $M_0$,
the pair $(F_\delta,\hat \xi_E)$ gives an immersion
of $M^n$ into $\R^{n+1}_1\times \R^{n+1}_1$ 
if and only if $(F,\hat \xi_E)$ is an immersion. So (1) is proved. 

We next  prove (2).
By Proposition~\ref{prop:k-lemma-m1},
there exists a vector $\vect{v}\in T_pM^n$
such that $dF_p(\vect{v})=(\hat \xi_E)_p$.
We then
take a local coordinate system $(u_1,\ldots,u_n)$
of $M^n$ centered at $p$ such that
$$
(\partial_{u_1})_p\,\,,\ldots,\,\,(\partial_{u_r})_p \qquad 
(r>0)
$$
belong to the kernel of $(dF)_p$
and $(\partial_{u_n})_p=\vect{v}$.
We let $V$ be the subspace of $\R^{n+1}_1$, which  is  spanned by
$F_{u_{r+1}}(p)\,\,,\ldots,\,\,F_{u_{n-1}}(p)$.
We consider
the $r$-dimensional
vector space $W$ given by
\eqref{eq:sp-W}.
By
Lemma~\ref{lem:VW},
it can be easily checked that
the three vector subspaces $V,\,\, W$ and $N:=\R\hat\xi_E$
satisfy the assumption of 
Lemma~\ref{prop:VWL} in Appendix~A. 
So we have 
$$
V\cap W=V\cap N=W\cap N=\{\mb 0\}.
$$
In particular, the vectors
$$
 (\hat \xi_E)_{u_1},\,\,\ldots, (\hat \xi_E)_{u_r},\,\,
F_{u_{r+1}}\,\,,\ldots,\,\,F_{u_{n}}(=\hat \xi_E)
$$
are linearly independent at $p\in M^n$.
We fix a non-zero real number $\delta$.
Then we have
\begin{align*}
m(\delta):=
&\op{rank}\Big(dF_\delta(\partial_{u_1}),\dots, dF_\delta(\partial_{u_r}),
                    dF_\delta(\partial_{u_{r+1}}),
\dots, dF_\delta(\partial_{u_n})\Big)\\
&
=
\op{rank}\Big(\delta d\hat\xi_E(\partial_{u_1}),
\dots, \delta d\hat\xi_E(\partial_{u_r}),
                    dF(\partial_{u_{r+1}})+
\delta d\hat\xi_E(\partial_{u_{r+1}}),\\
&\phantom{aaaaaaaaaaaaaaaaaaaaaaaaaaaaaaaaa}\dots, 
dF(\partial_{u_n})+\delta d\hat\xi_E(\partial_{u_{n}})\Big) \\
&
=\op{rank}\Big(d\hat\xi_E(\partial_{u_1}),\dots, 
d\hat\xi_E(\partial_{u_r}),
dF(\partial_{u_{r+1}})+\delta d\hat\xi_E(\partial_{u_{r+1}}),\\
&\phantom{aaaaaaaaaaaaaaaaaaaaaaaaaaaaaaaaa}\dots, 
dF(\partial_{u_n})+\delta d\hat\xi_E(\partial_{u_{n}})\Big). 
\end{align*}
If $|\delta|$ is
sufficiently small,
the right-hand side of the last equality
is equal to $n$ at $p$. So  we can conclude that
$F_\delta$ is an immersion on a sufficiently small neighborhood of $p$
for sufficiently small $|\delta|(>0)$.
\end{proof}

\begin{definition}[$E$-normalized null vector field]
Let $F:M^n\to \R^{n+1}_1$ be a null $C^r$-wave front
and $\hat \xi_E$ its associated $E$-normalized normal vector field. 
If there exists a $C^r$-differentiable
vector field $\xi_E$ defined on $M^n$ 
such that
$
dF(\xi_E)=\hat \xi_E
$,
then $\xi_E$ is called the {\it $E$-normalized null vector field} with
respect to  $F$
(in fact, $\xi_E$ is uniquely determined as follows).
\end{definition}

The following is 
the deepest result in this section,
which asserts the existence of
$E$-normalized null vector field:

\begin{theorem}\label{prop:10}
Let $F:M^n\to \R^{n+1}_1$ be a null $C^r$-wave front
and $\hat \xi_E$ its associated $E$-normalized normal vector field. 
Then 
there exists a unique 
$C^r$-differentiable
vector field $\xi_E$ defined on $M^n$ 
such that
\begin{enumerate}
\item
$
dF(\xi_E)=\hat \xi_E
$
$($that is, $\xi_E$ is the $E$-normalized null vector field$)$, and
\item the image of each integral curve of $\xi_E$
is a part of a light-like line in $\R^{n+1}_1$.
\end{enumerate}
Moreover, 
$dF(T_pM^n)$ is a light-like vector space in $\R^{n+1}_1$.
\end{theorem}

\begin{proof}
Roughly speaking, the key of the proof is that 
the image of the null wave front $F$ is foliated 
by light-like lines
as mentioned in the introduction, and
we will construct a vector field $\xi_E$ defined on $M^n$ 
so that each integral curve of $\xi_E$  corresponds to
the leaf of the foliation as follows.

We fix a point $p\in M^n$ arbitrarily.
By Proposition~\ref{lem:Fd},
for a sufficiently small positive number $\delta$,
there exists
a neighborhood $U$ of $p$
such that $F_\delta$ is an immersion 
on $U$.
Hence, by Lemma~\ref{lem:xi-M^n}, 
there exists a unique
$C^r$-differentiable null vector field $\xi_E$ 
satisfying
$
dF_\delta(\xi_E)=\hat \xi_E
$
on $U$.
We let $\gamma_q(t)$ be
the integral curve of $\xi_E$  
such that $\gamma_q(0)=q$ for $q\in U$.
Since $\hat \xi_E$ is a
null vector field along $F_\delta$,
\cite[Fact 2.6]{AHUY}
implies that
$F_\delta\circ \gamma_q$ parametrizes a segment of
a light-like line.
Thus $F_\delta\circ \gamma_q(t)$ is a 
geodesic in $\R^{n+1}_1$, and so we have
$
D_{\xi_E}dF_\delta(\xi_E)=\mb 0,
$
where $D$ is the Levi-Civita connection of $\R^{n+1}_1$.
So,
$$
dF(\xi_E)=dF_\delta(\xi_E)-\delta (d \hat\xi_E)(\xi_E)
=\hat \xi_E-\delta D_{\xi_E}dF_\delta(\xi_E)=\hat \xi_E
$$
holds at $p$. 
Since the light-like vector $(\hat \xi_E)_q$ ($q\in U$)
lies in $dF_q(T_qM^n)$, the vector space
$(dF)_q(T_qM^n)$ is
light-like. 
Since $p$ is arbitrary, the uniqueness of 
$\xi_E$ implies it can be defined on $M^n$, proving the assertion.
\end{proof}

\section{A fundamental theorem for $L$-complete null wave fronts}

We first define the following \lq\lq $L$-completeness''
of null wave fronts in
$\R^{n+1}_1$, which is analogous to the case of
null immersions given in \cite[Definition 2.7]{AHUY}.

\begin{definition}\label{def:965}
Let $F:M^n\to \R^{n+1}_1$ be a 
(co-orientable)
null $C^r$-wave front.
The map $F$ is called {\it $L$-complete} if
for each $p\in M^n$,
there exists an integral curve $\gamma:\R\to M^n$
of the $E$-normalized null vector field $\xi_E$
passing through $p$ such that
$F\circ \gamma(\R)$ coincides with an entire
light-like line in $\R^{n+1}_1$.
\end{definition}

The following assertion holds:
 
\begin{proposition}
Let $F:M^n\to \R^{n+1}_1$ be a $($co-orientable$)$
null $C^r$-wave front.
Then $F$ is $L$-complete if and only if
its $E$-normalized null vector field 
$\xi_E$ is complete on $M^n$
$($that is, each integral curve of $\xi_E$ is defined on $\R)$.
\end{proposition}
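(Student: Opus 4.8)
The plan is to derive both implications from the explicit description of the integral curves of $\xi_E$ provided by Theorem \ref{prop:10}, combined with the uniqueness of solutions of the ordinary differential equation $\gamma'=\xi_E\circ\gamma$. First I would record the following: if $\gamma$ is an integral curve of $\xi_E$ with $\gamma(0)=q$, then $(F\circ\gamma)'(t)=dF(\xi_E(\gamma(t)))=(\hat\xi_E)_{\gamma(t)}$, and by Theorem \ref{prop:10} the image $F\circ\gamma$ lies on a light-like line. Tracing through the proof of that theorem, and in particular the identity \eqref{eq:F-delta-1}, this is in fact a constant-speed affine parametrization, so that
\begin{equation*}
F\circ\gamma(t)=F(q)+t\,(\hat\xi_E)_q
\end{equation*}
holds along the whole orbit, with fixed nonzero null direction $(\hat\xi_E)_q$ (nonzero because $\hat\xi_E$ has no zeros). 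Consequently $F\circ\gamma(\R)$ is the \emph{entire} light-like line through $F(q)$ exactly when $\gamma$ is defined on all of $\R$; this equivalence is the engine for both directions.

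For the implication ``$\xi_E$ complete $\Rightarrow$ $F$ is $L$-complete'', I would take an arbitrary $p\in M^n$ and let $\gamma$ be the maximal integral curve of $\xi_E$ with $\gamma(0)=p$. Completeness of $\xi_E$ means $\gamma$ is defined on all of $\R$, and the displayed formula then shows that $F\circ\gamma(\R)$ coincides with the full light-like line through $F(p)$ in the direction $(\hat\xi_E)_p$. This is precisely what the definition of $L$-completeness demands at $p$, and since $p$ is arbitrary, $F$ is $L$-complete.

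For the converse, the point is to pass from the existential statement contained in the definition of $L$-completeness to the universal statement defining completeness of the vector field. Assuming $F$ is $L$-complete, for each $p\in M^n$ there exists \emph{some} integral curve $\gamma:\R\to M^n$ of $\xi_E$ passing through $p$. By the uniqueness of maximal integral curves of the $C^r$-vector field $\xi_E$, this $\gamma$ coincides, after a shift of parameter, with the maximal integral curve through $p$; as $\gamma$ is already defined on all of $\R$, so is that maximal curve. Hence every maximal integral curve is defined on $\R$, i.e. $\xi_E$ is complete. The one step deserving genuine care is this last upgrade: $L$-completeness only guarantees a single complete orbit through each point, and it is exactly the ODE uniqueness theorem that promotes this to completeness of the field; the rest is bookkeeping built on Theorem \ref{prop:10}.
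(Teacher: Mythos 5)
Your proof is correct and follows essentially the same route as the paper's: both rest on the fact (via Theorem \ref{prop:10} and $|dF(\xi_E)|_E=\sqrt{2}$) that $F\circ\gamma$ is an affine, constant-speed parametrization of a light-like line, so that an integral curve is defined on all of $\R$ exactly when its image under $F$ is the entire line. The paper's version is terser, leaving the existential-to-universal step implicit, whereas you make it explicit via uniqueness of maximal integral curves; the substance is the same.
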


\begin{proof}
Let $\gamma(t)$ be any integral curve of $\xi_E$.
Since $|dF(\xi_E)|_E=\sqrt{2}$,
$\gamma(t)$ is parametrized by an affine parametrization of a light-like line.
So $\gamma$ is defined on $\R$ if and only if
the image of $F\circ \gamma$ coincides with the entirety
of a line.
\end{proof}

We consider the height function
\begin{equation}\label{eq:tau}
\hat\tau:\R^{n+1}_1 \ni \mb v \mapsto -\inner{\mb v}{\mb e_{0}}\in \R
\end{equation}
with respect to the time axis, where
$
\mb e_{0}:=(1,0,\ldots,0)
$.
The level set $\hat\tau^{-1}(0)$ is a space-like hyperplane
which is isometric to the Euclidean $n$-space $\R^n_0$.
So, we frequently use the identification 
\begin{equation}\label{eq:Id}
\R^n_0\ni x:=(x_1,\ldots,x_n) \,\,\longleftrightarrow \,\,
\tilde x:=(0,x_1,\ldots,x_n) \in \hat\tau^{-1}(0).
\end{equation}
Let $\Sigma^{n-1}$ be an $(n-1)$-manifold. 
A  $C^r$-map $f:\Sigma^{n-1}\to \R^{n}_0$
is called a (co-orientable) {\it wave front} if
there exists a unit normal vector field
$\nu$ along $f$ defined on $\Sigma^{n-1}$
such that the map
$\Sigma^{n-1}\ni p \mapsto (f(p),\nu(p))\in \R^{n}_0\times S^{n-1}$
is an immersion, where $S^{n-1}$ is the unit sphere centered
at the origin in $\R^{n}_0$.
We now show that the following representation formula 
of $L$-complete null wave fronts in $\R^{n+1}_1$ from a given 
wave front in $\R^n_0$ as follows:

\begin{theorem}\label{prop:make}
Let $f:\Sigma^{n-1}\to \R^n_0$ be a 
$($co-orientable$)$  $C^r$-wave front  with
unit normal vector field $\nu$.
Then for each choice of
$\sigma\in \{+,-\}$,
the map
$\mc F^f_{\sigma}:\R\times \Sigma^{n-1} \to \R^{n+1}_1$
defined by
\begin{equation}
\mc F^f_{\sigma}(t,x)
:=\tilde f(x)+t \hat \xi_{\sigma}(x), \qquad
\hat \xi_{\sigma}(x):=(1,\sigma\nu(x))
\qquad (t\in \R,\,\, x\in \Sigma^{n-1})
\end{equation}
gives an $L$-complete null wave front in $\R^{n+1}_1$,
where
$
\tilde f(x):=(0,f(x)).
$
Moreover, the regular set of $\mc F^f_{\sigma}$
is dense in $\R\times \Sigma^{n-1}$. 
\end{theorem}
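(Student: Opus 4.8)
The plan is to construct the E-normalized data of $\mc F^f_\sigma$ by hand, read off $L$-completeness from the product structure, and then obtain density of the regular set from the parallel-front lemma. Throughout write $\mc F:=\mc F^f_\sigma$, so that $\partial_t\mc F=\hat\xi_\sigma$ and $d\mc F(\mb w)=(0,df(\mb w)+t\sigma\,d\nu(\mb w))$ for $\mb w\in T_p\Sigma^{n-1}$.

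First I would verify that $\hat\xi_\sigma=(1,\sigma\nu)$ is the E-normalized normal vector field, i.e. that the three characterizing conditions for a null wave front hold. A direct computation gives $(\hat\xi_\sigma,\hat\xi_\sigma)_E=1+|\nu|_E^2=2$ and $\inner{\hat\xi_\sigma}{\hat\xi_\sigma}=-1+|\nu|^2=0$, so $\hat\xi_\sigma$ is future-pointing, null, and E-normalized. Using that $\nu$ is normal to $f$ (so $\nu\cdot df(\mb w)=0$) and $\nu\cdot d\nu(\mb w)=\tfrac12 d(\nu\cdot\nu)(\mb w)=0$, one obtains $\inner{\hat\xi_\sigma}{d\mc F(\mb v)}=0$ for every $\mb v$ (for $\mb v=\partial_t$ this is $\inner{\hat\xi_\sigma}{\hat\xi_\sigma}=0$). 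Finally $(\mc F,\hat\xi_\sigma)$ is an immersion: if $a\partial_t+\mb w$ lies in the kernel of its differential, the $\hat\xi_\sigma$-part forces $d\nu(\mb w)=0$, and the $\mc F$-part then forces $a=0$ and $df(\mb w)=0$; since $f$ is a wave front, $\mb w\mapsto(df(\mb w),d\nu(\mb w))$ is injective, so $\mb w=0$. Hence $\mc F$ is a null wave front with $\hat\xi_E=\hat\xi_\sigma$.

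Next, $L$-completeness is immediate from the product structure. Since $d\mc F(\partial_t)=\hat\xi_\sigma=\hat\xi_E$, the uniqueness clause in Theorem~\ref{prop:10} identifies the E-normalized null vector field as $\xi_E=\partial_t$. Its integral curves $t\mapsto(t+c,p_0)$ are defined on all of $\R$, and $\mc F(t,p_0)=\check f(p_0)+t\hat\xi_\sigma(p_0)$ traces the entire light-like line through $\check f(p_0)$ in the fixed null direction $\hat\xi_\sigma(p_0)$; hence $\mc F$ is $L$-complete.

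The step I expect to carry the real content is density of the regular set, and the key is the shift identity $\mc F^\delta(t,p)=\mc F(t,p)+\delta\hat\xi_E(p)=\check f(p)+(t+\delta)\hat\xi_\sigma(p)=\mc F(t+\delta,p)$, which says that the parallel front $\mc F^\delta$ equals $\mc F$ precomposed with the $t$-translation $(t,p)\mapsto(t+\delta,p)$. Now fix a singular point $(t_0,p_0)$. By Proposition~\ref{lem:Fd}(2) there is $\epsilon_0>0$ with $\mc F^\delta$ an immersion at $(t_0,p_0)$ for $0<|\delta|<\epsilon_0$; by the shift identity this is exactly the statement that $\mc F$ is an immersion at $(t_0+\delta,p_0)$ for $0<|\delta|<\epsilon_0$. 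Thus $t_0$ is isolated within the singular set of the fiber $\R\times\{p_0\}$, so that singular set is discrete for every $p_0$. Consequently the singular set contains no subset of the form $I\times U$ with $I\subseteq\R$ a nonempty open interval and $U\subseteq\Sigma^{n-1}$ nonempty open; since every nonempty open subset of $\R\times\Sigma^{n-1}$ contains such a product, the singular set has empty interior, which is precisely the asserted density of the regular set. The one thing to check carefully is that Proposition~\ref{lem:Fd} is legitimately applied to $\mc F$ itself (it is, since $\mc F$ is a null wave front with E-normalized normal $\hat\xi_E=\hat\xi_\sigma$) and that the translation diffeomorphism does not affect the immersion property.
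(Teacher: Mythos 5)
Your proof is correct, and while your verification that $\mc F^f_\sigma$ is a null wave front and your treatment of $L$-completeness essentially reproduce the paper's argument (your kernel computation for $(\mc F,\hat\xi_\sigma)$ is the same content as the paper's rank computation of the matrix $M_t$, and your identification $\xi_E=\partial_t$ via the uniqueness clause of Theorem \ref{prop:10} is only a more explicit version of the paper's ``obvious by definition''), your proof of the density of the regular set takes a genuinely different route. The paper fixes $x\in\Sigma^{n-1}$ and invokes the classical fact that the parallel hypersurface $f+t\nu$ is singular at $x$ exactly when $t$ is a reciprocal principal curvature of $f$ at $x$, so each fiber $\R\times\{x\}$ contains at most $n-1$ singular points. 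You instead apply Proposition \ref{lem:Fd}(2) to $\mc F^f_\sigma$ itself, combined with the translation identity $\mc F^\delta(t,p)=\mc F(t+\delta,p)$, to conclude that singular parameters are isolated along each fiber. This is legitimate and non-circular: Proposition \ref{lem:Fd} and Theorem \ref{prop:10} are established before, and independently of, Theorem \ref{prop:make}. What your route buys is robustness: the paper's principal-curvature fact is stated loosely and requires extra care at points where $f$ itself is singular (the shape operator is undefined there), whereas your argument needs no curvature data at all, only the already-proved deformation lemma. What the paper's route buys is a sharper conclusion: finiteness (not just discreteness) of the singular set on each fiber, i.e.\ a quantitative description of where the singularities sit; your argument yields only that each fiber's singular set is discrete, which is exactly enough for density but no more.
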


When $f$ is an immersion, this formula is known (see Kossowski
\cite{K}). So this theorem can be considered as its
generalization for wave fronts.
The slice of the image of $\mc F^f_{\pm}$ by
a hyperplane $\{t=c\}$ ($c\in \R$) is congruent to
the image of a parallel hypersurface $f^c$ of $f$.

\begin{remark}
Since
$$
\mc F^f_-(t,x)=\pmt{-1 & 0 & \cdots& 0 \\
          0 & 1 & \cdots& 0 \\
         \vdots& \vdots  & \ddots& \vdots \\
         0 &  0 & \cdots & 1}\, \mc F^f_+(-t,x),
$$
the image of $\mc F^f_-$ is congruent to that of $\mc F^f_+$ in $\R^{n+1}_1$.
So we call $\mc F^f_+$ the {\it normal form} of the
null wave front associated with $f$. 
\end{remark}

\begin{proof}[Proof of Theorem~\ref{prop:make}]
Without loss of generality, we may assume that $\sigma=+$,
and we set
$$
F:=\mc F^f_+,\qquad \hat \xi:=
\hat \xi_{+}
\Big(=\big(1,\nu\big)\Big).
$$
Since  $\hat \xi(x)$ ($x\in \Sigma^{n-1}$)
is orthogonal
to  $dF(T_p(\R\times \Sigma^{n-1}))$ with respect to
the Lorentzian inner product $\inner{\,}{\,}$,
the vector field
$\partial_t:=\partial/\partial t$
is a null vector field of $F$ defined on $\R\times \Sigma^{n-1}$.
We write
\begin{align*}
& F=(F^0,\ldots,F^{n}), 
\qquad \hat \xi=(\hat\xi^0,\ldots,\hat\xi^{n}), \\
&f=(f^1,\ldots,f^{n}), \,\,
\text{ and }\,\,\,
\nu=(\nu^1,\ldots,\nu^{n}).
\end{align*}
In the following discussions,
we think that $F$ and $\nu$ take values in
column vectors.
Then we have
$$
F^{0}=t, \quad \hat\xi^{0}=1,\quad F^i=f^i+t \nu^i,\quad
\hat\xi^i=\nu^i \qquad (i=1,\ldots,n).
$$
We fix $x\in \Sigma^{n-1}$ arbitrarily,
and take a local coordinate system
$(u_1,\ldots,u_{n-1})$ of $\Sigma^{n-1}$ centered at $x$.
Then
$(t,u_1,\ldots,u_{n-1})$ gives a 
local coordinate system of $\R\times \Sigma^{n-1}$.
To show that
$\mathcal{L}_F:=(F,\hat \xi)$
is an immersion,
it is sufficient to show that
$(2n+2)\times n$ matrix
$$
M_t:=\pmt{
F_{t} &
F_{u_1}&
\cdots & F_{u_{n-1}} \\
\hat \xi_{t} &
\hat \xi_{u_1} &
\ldots & \hat \xi_{u_{n-1}} }
$$
is of rank $n$.
Since
\begin{align}\label{eq:Ft1151}
&(F_t(t,x),\,\, F_{u_1}(t,x),\,\, \ldots,\,\, F_{u_{n-1}}(t,x)) \\
\nonumber
&\phantom{aaaaaaaaaaaa}=
\pmt{
1    &   0                       &  \ldots &  0  \\
\nu(x)   & f_{u_1}(x)+t\nu_{u_1}(x)   & \ldots & f_{u_{n-1}}(x)+t \nu_{u_{n-1}}(x)
}
\end{align}
holds, we have
\begin{equation}\label{eq:Mt}
M_t
=
\pmt{
1    &   0                       &  \ldots &  0  \\
\nu   & f_{u_1}+t \nu_{u_1}   & \ldots & f_{u_{n-1}}+t \nu_{u_{n-1}} \\
0    &   0                       &  \ldots &  0  \\
\mb 0 & \nu_{u_1}  & \ldots & \nu_{u_{n-1}}
}.
\end{equation}
Since $f$  is a wave front, we have
$$
n-1=
\op{rank}\pmt{f_{u_1}   & \ldots & f_{u_{n-1}} \\
\nu_{u_1}  & \ldots & \nu_{u_{n-1}}
},
$$
which implies that $M_t$ is of rank $n$, that is, 
$F$ is a null wave front by
Proposition~\ref{prop:536}.
Since we have already shown that
$\partial_t$ points in the null direction,
$F$ is a null wave front.
By definition, it is obvious that $F$ is $L$-complete.

We next show that
$F$ is an immersion on an
open dense subset of $\R\times \Sigma^{n-1}$.
By
\eqref{eq:Ft1151},
the matrix $(F_t,F_{u_1},\ldots,F_{u_{n-1}})$
is of rank $n$ at $(t,x)\in \R\times \Sigma^{n-1}$
if
$$
n-1=
\op{rank} \Big( f_{u_1}(x)+t \nu_{u_1}(x),\,\,   \ldots ,\,\,
f_{u_{n-1}}(x)+t \nu_{u_{n-1}}(x)
\Big).
$$
To prove that this holds for almost all $(t,x)$, 
one can use the fact that 
the parallel hypersurface $f_t:=f+t\nu$ (for fixed $t$)
has a singular point at $x$
if and only if $t$ coincides with
one of the inverse of principal curvatures
of $f$ at $x$:
If a given point $(t,x)\in \R\times \Sigma^{n-1}$ is a singular point of $F$,
then $x$ is a regular point of $f_{t_n}$ for $t_n:=t+1/n$. 
In particular, $(t,x)$ is an accumulation point of the regular 
points $\{(t_n,x)\}_{n=1}^\infty$ of $F$, and we can conclude that
the regular set of $F$ is dense in $\R\times \Sigma^{n-1}$.
\end{proof}

\begin{lemma}\label{prop:Lf-E}
In the setting of
Theorem~\ref{prop:make},
if 
$$
l_f:\Sigma^{n-1}\ni x \mapsto (f(x),\nu(x))\in \R^n_0\times S^{n-1}
$$
is an embedding, then by setting
$
\hat \xi_+(x):=(1,\nu(x)),
$
the map
$$
\mc L_F:\R\times \Sigma^{n-1}\ni (t,x)\mapsto \big(\mathcal F^f_+(t,x),
\hat \xi_+(x)\big)\in 
\R^{n+1}_1\times 
\R^{n+1}_1
$$
is also an embedding, where $F:=\mathcal F^f_+$.
\end{lemma}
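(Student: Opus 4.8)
The plan is to recognize $L_f$ as a Legendrian lift and then reduce the claim to a purely topological statement. First I would observe that $(\hat\xi_+,\hat\xi_+)_E = 1 + |\nu|^2 = 2$ and that $\hat\xi_+$ is future-pointing (its $\mb e_0$-component is $1$); hence $\hat\xi_+$ is exactly the E-normalized normal vector field of $F:=\mc F^f_+$, and $L_f$ coincides with the Legendrian lift $L_F$ of Lemma \ref{lem:k-lemma0}. Since $F$ is a null wave front by Theorem \ref{prop:make}, that lemma already gives that $L_f$ is an immersion. So it remains only to show that $L_f$ is injective and a homeomorphism onto its image.

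For injectivity I would suppose $L_f(t,x) = L_f(s,y)$ and recover the data one coordinate block at a time. Equality of the second factors gives $(1,\nu(x)) = (1,\nu(y))$, hence $\nu(x)=\nu(y)$; reading the $\mb e_0$-component of the first factors (equivalently applying $\tau$ of \eqref{eq:tau}, for which $\tau(\check f)=0$ and $\tau(\hat\xi_+)=1$) gives $t=s$. With $t=s$ and $\nu(x)=\nu(y)$ in hand, the remaining components of $\mc F^f_+(t,x)=\mc F^f_+(t,y)$ force $f(x)=f(y)$, so $l_f(x)=l_f(y)$, and injectivity of $l_f$ yields $x=y$.

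Finally I would produce an explicit continuous inverse on the image: from $(P,\hat\xi)$ set $t:=\tau(P)$, read $\nu$ off the spatial part of $\hat\xi$, and read $f$ off the spatial part of $P-t\hat\xi=\check f(x)$, thereby reconstructing $l_f(x)=(f(x),\nu(x))$ by continuous (indeed affine-linear) operations. The step I expect to be the crux --- and the only place the hypothesis is genuinely used --- is the passage $l_f(x)\mapsto x$: here I would invoke that $l_f$ is an \emph{embedding}, not merely an injective immersion, so that $l_f^{-1}$ is continuous on $l_f(\Sigma^{n-1})$. This makes the composite inverse $(P,\hat\xi)\mapsto(t,x)$ continuous, so $L_f$ is a homeomorphism onto its image; combined with the immersion property, $L_f$ is an embedding.
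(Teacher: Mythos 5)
Your proposal is correct and follows essentially the same route as the paper: the paper also builds an explicit continuous inverse by intersecting the line $\{P+s\xi\}$ with the hyperplane $\tau^{-1}(0)$ (which is exactly your $P-\tau(P)\hat\xi$), reading off $\nu$ via the projection $\pi_0$, and then applying $l_f^{-1}$, whose continuity is precisely where the embedding hypothesis on $l_f$ enters. Your additions --- identifying $\hat\xi_+$ as the E-normalized normal field so that Lemma \ref{lem:k-lemma0} supplies the immersion property, and the separate injectivity check --- only make explicit what the paper leaves implicit.
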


\begin{proof}
We remark that $\mc L_F$ ($F:=\mc F^f_+$)
is a map into $\R^{n+1}_1\times (\R^{n+1}_1\setminus\{\mb 0\})$.
Since $F$ is a null wave front, $\mc L_F$ is an immersion
(cf. Proposition \ref{prop:536}).
So it is sufficient to prove that $\mc L_F$ is a homeomorphism
from $M^n$ to $\mc L_F(M^n)$.
For the sake of simplicity, we set $\hat \xi:=\hat \xi_+$. 

We let $(P,\hat \xi)$ be a point in $\mc L_F(\R\times \Sigma^{n-1})$.
Then there exists a unique point
$\Phi(P,\hat \xi)$
at which the straight line $\{P+t \hat \xi\,;\, t\in \R\}$
meets the hyperplane $\hat\tau^{-1}(0)$ in $\R^{n+1}_1$.
We let 
\begin{equation}\label{eq:p0R}
\pi_0:\R^{n+1}_1\to \hat\tau^{-1}(0)
\end{equation}
be the canonical orthogonal projection.
Since $\Phi(P,\hat \xi)$
depends on $P$ and $\hat \xi$
continuously,
the map
$
(P,\hat\xi)\mapsto (\Phi(P,\hat \xi),\pi_0(\hat \xi))
$
is a continuous map
whose image coincides with $l_f(\Sigma^{n-1})$.
So 
$$
\mc L_F(\R\times \Sigma^{n-1})\ni 
(P,\hat \xi)\mapsto (\hat\tau(P),l_f^{-1}\circ \Phi(P,\hat \xi))\in \R\times \Sigma^{n-1}
$$
is well-defined, and gives the 
inverse map of $\mc L_F$.
So $\mc L_F$ is an embedding. 
\end{proof}

Conversely, we can prove the following:

\begin{theorem} \label{thm:D}
Let $F:M^n\to \R^{n+1}_1$ be a $($co-orientable$)$
$L$-complete null $C^r$-wave
front,
then  there exists a co-orientable $C^r$-wave front
$f:\Sigma^{n-1}\to \R^{n}_0$
and a diffeomorphism
$\Phi:M^n\to \R\times \Sigma^{n-1}$
such that $F\circ \Phi^{-1}$ coincides with
the null wave front $F^f_+$
as in Theorem \ref{prop:make}.
\end{theorem}

\begin{proof}
Let  $\xi_E$ be the $E$-normalized null vector field of $F$.
By  Theorem \ref{prop:10}, 
the image of each integral curve $\gamma(t)$ of $\xi_E$ 
is a light-like geodesic.
So the identity
\begin{equation}\label{eq:Lambda}
D_{\xi_E} \hat \xi_E=D_{\gamma'(t)} dF(\gamma'(t))=
\mb 0 
\end{equation}
holds on $M^n$, where $\hat \xi_E:=dF(\xi_E)$ and $D$ is the
Levi-Civita connection of $\R^{n+1}_1$.

Since
$|\hat \xi_E|_E=\sqrt{2}$ and $\hat \xi_E$ points
in the light-like direction,
we have
$
d\hat \tau(dF(\xi_E))\ne \mb 0
$,
in particular, by applying the implicit function theorem,
the zero-level set 
$$
\Sigma^{n-1}:=(\hat\tau\circ F)^{-1}(0)\,(\subset M^n)
$$
is an embedded hypersurface of $M^n$.
Let $\{\phi_t\}_{t\in \R}$ be the
one parameter group of transformations 
on $M^n$ associated with $\xi_E$.
As in \cite[Proposition A.3]{AHUY},
the map defined by
\begin{equation}\label{eq:Psi}
\Psi:\R\times \Sigma^{n-1}\ni (t,p)\mapsto \phi_t(p)\in M^n
\end{equation}
is an immersion. 
Since (cf. \eqref{eq:Lambda})
$
\hat \gamma(t):=F(\phi_t(p))\,\, (t\in \R)
$
is a light-like geodesic in $\R^{n+1}_1$,
we obtain the expression
\begin{equation}\label{eq:F}
F\circ \Psi(t,p)=F\circ \phi_t(p)
=F(p)+t \hat \xi_E(p) \qquad (t\in \R,\,  p\in \Sigma^{n-1}).
\end{equation}
In particular,
$
\R\ni t \mapsto F\circ \Psi(t,p)\in \R^{n+1}_1
$
is an injection, and so,
$$
\R\ni t \mapsto \phi_t(p)\in M^n
$$
is also an injection.
We suppose that
$\Psi$ is not injective
and 
$
\Psi(t,p)=\Psi(s,q)
$
holds.
Since $\phi_{s-t}(q)=p$, 
the point $q$ lies on the integral curve 
passing through $p$.
Since
$\hat\gamma(=F\circ \gamma)$ is a straight line,
$\gamma$ meets $\Sigma^{n-1}$ exactly once.
So we can conclude $p=q$ and $s=t$.
Thus $\Psi$ is an injection.
On the other hand,
since $F$ is $L$-complete,
any integral curve of $\xi_E$
must meet $\Sigma^{n-1}$. 
So $\Psi$ is bijective
and then it becomes
a diffeomorphism.
If we set
$\hat \xi_E(p)=(a(p),\nu(p))$ ($p\in M^n$),
then we have
\begin{equation}\label{eq:aa1}
2=(\hat \xi_E(p),\hat \xi_E(p))_E=a(p)^2+|\nu(p)|^2.
\end{equation}
Since
\begin{equation}\label{eq:aa2}
0=\langle\hat \xi_E(p),\hat \xi_E(p)\rangle=-a(p)^2+|\nu(p)|^2,
\end{equation}
we have
$
a(p)^2=|\nu(p)|^2=1,
$
which implies that $\nu$ is a unit normal vector field of 
$\tilde f:=F|_{\Sigma^{n-1}}$
in the space-like hyperplane $\hat\tau^{-1}(0)$.
We fix $\mb v\in T_p\Sigma^{n-1}$ arbitrarily, then
we can write
$
(d\tilde f)_p(\mb v)=(0,\mb a)
$
where  $\mb a\in \R^{n}_0$.
Since $\hat \xi_E=(1,\nu)$, we have
\begin{equation}\label{eq:aa3}
0=\inner{(d\tilde f)_p}{\hat \xi}=\mb a\cdot \nu,
\end{equation}
where the dot denotes the canonical Euclidean inner product
on $\R^n_0$, where $\hat\tau^{-1}(0)=\{0\}\times \R^n_0$.
So $\nu$ is a
unit normal vector field of $f$.

Finally, we show that $\tilde f$ is a wave front defined on $\Sigma^{n-1}$:
Since $F$ is a null wave front, the pair  $\mc L_F:=(F,\hat \xi_E)$ 
is an immersion on $M^n$ into $\R^{2n+2}$.
Then its restriction 
$$
(\tilde f,\hat \xi_E|_{\Sigma^{n-1}}):\Sigma^{n-1}\to \R^{n+1}_1\times \R^{n+1}_1
$$ 
is also an immersion.
Moreover, since $
\xi_E|_{\Sigma^{n-1}}=(1,\nu),
$
the map
$$
(f,\nu):\Sigma^{n-1}\to \R^{n}_0\times \R^{n}_0
$$
is an immersion (where $\tilde f=(0,f)$), and so
$f:\Sigma^{n-1}\to \R^{n}_0$ is a co-orientable wave front.
Summarizing the above discussions, 
\eqref{eq:F} can be written as
\begin{align*}\label{eq:F2}
F\circ \Psi(t,x)&=F\circ \phi_t(x)
=F(x)+t \hat \xi_E(x) \\ 
&=\tilde f(x)+t(1,\nu(x))=F^f_+(t,x) \qquad ((t,x)\in \R\times \Sigma^{n-1}).
\end{align*}
By setting $\Phi:=\Psi^{-1}$, we obtain the assertion.
\end{proof}

\begin{corollary}\label{cor:condB}
The regular set of an
$L$-complete
null $C^r$-wave  front 
$F:M^n\to \R^{m+1}_1$
is dense in
$M^n$.
\end{corollary}

\begin{proof}
We have just shown that such a null wave front can be reparametrized as
a normal form.
So, the assertion follows from the last statement of
Theorem~\ref{prop:make}.
\end{proof}

\begin{corollary}\label{cor:condC}
Let $F:M^n\to \R^{n+1}_1$ be an $L$-complete 
null $C^r$-wave
 front. Then, 
for each $\delta\in \R$
the parallel hypersurface $F_\delta$
given in  \eqref{eq:FDelata}
is also an $L$-complete null wave front 
and has the same image as $F$.
\end{corollary}

\begin{proof}
By Theorem \ref{thm:D},
there exists a co-orientable $C^r$-wave front
$f:\Sigma^{n-1}\to \R^{n}_0$
and a diffeomorphism
$\Phi:M^n\to \R\times \Sigma^{n-1}$
such that $F\circ \Phi^{-1}=F^f_+$ holds on $M^n$.
We let $\hat \xi_E$ be the
$E$-normalized normal vector field
along $F$. Then $\hat \xi_E\circ \Phi^{-1}$ is
the
$E$-normalized normal vector field of $F^f_+$.
So we have that
\begin{align*}
F_\delta\circ \Phi^{-1}(t,x)
&=F\circ \Phi^{-1}(t,x)+\delta \hat \xi_E\circ \Phi^{-1}(t,x) \\
&=\Big(\tilde f(x)+t \hat \xi_E\circ \Phi^{-1}(t,x)\Big)
+\delta \hat \xi_E\circ \Phi^{-1}(t,x) \\
&=\tilde f(x)+(t+\delta)\hat \xi_E\circ \Phi^{-1}(t,x)
=F\circ \Phi^{-1}(t+\delta,x),
\end{align*}
which proves the assertion.
\end{proof}

\section{A structure theorem of null wave fronts}

In this section, we give a structure theorem of null wave fronts
without assuming $L$-completeness.
Roughly speaking, a null
 wave front is foliated by line segments, 
so by extending each line segment to a whole line and 
connecting them appropriately, we will obtain an $L$-complete null wave front.

\begin{figure}[htb]
 \begin{center}
\includegraphics[height=3.7cm]{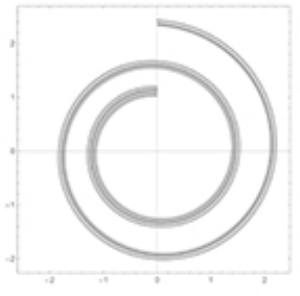} \qquad
\includegraphics[height=3.7cm]{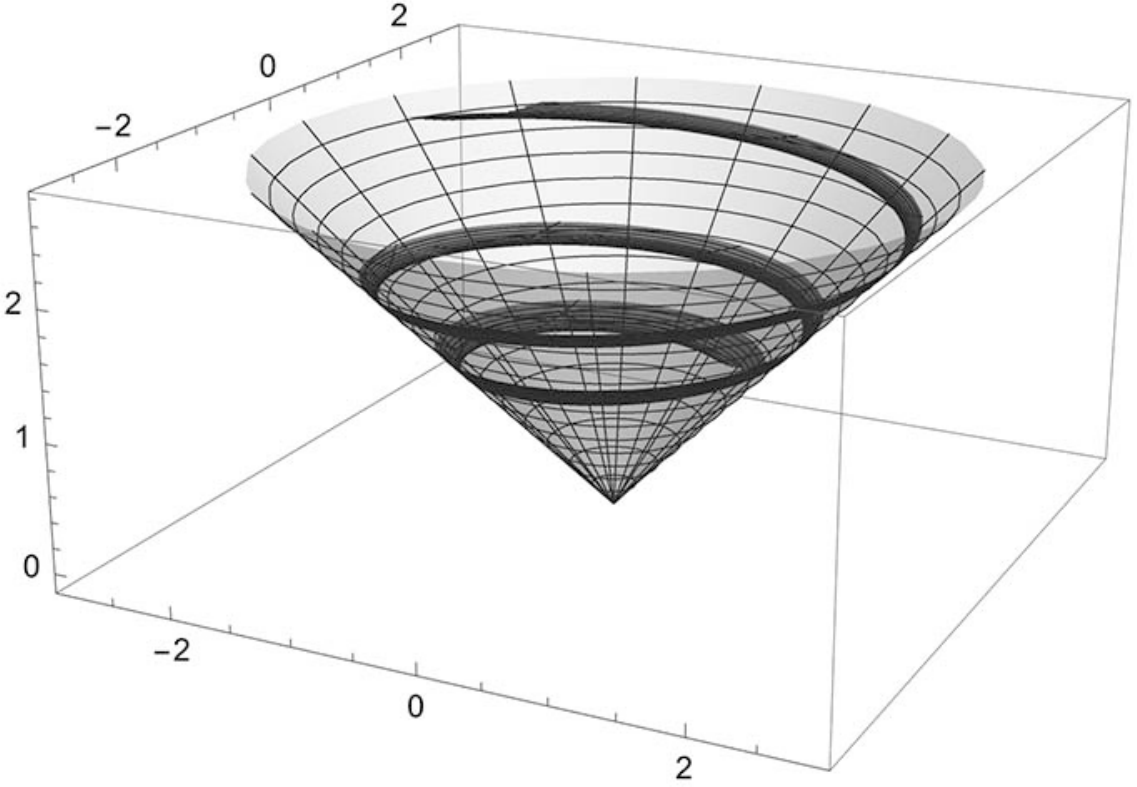}
 \end{center}
\caption{
The tubular neighborhood $\mc U$ of the logarithmic 
spiral $\gamma$ and
its image $F(\mc U)$ in the light-cone $\Lambda^2$.}
\label{fig:1630}
\end{figure}

Here is one example to illustrate the structure theorem: 
We consider the $1/2$-tubular neighborhood $\mc U$ 
of the logarithmic spiral 
(see Figure \ref{fig:1630}, left)
$$
\gamma(t)=e^{t/15}(\cos t,\sin t) \qquad (\pi/2<t<4\pi+\pi/2)
$$
in the $xy$-plane, 
and consider the image $F(\mc U)$ with respect to 
the graph $F(x,y):=(x,y,\sqrt{x^2+y^2})$.
Then $F(\mc U)$ is an open subset of the light-cone $\Lambda^2$
(see Figure \ref{fig:1630}, right). 
Since $F(\mc U)$ is a ruled surface, 
we can extend each of the ruling light-like lines
 to both sides, 
and obtain an $L$-complete null wave front $\check F$ 
as an $L$-completion of $F(\mc U)$.
However, this surface $\check F$ is different from the light-cone $\Lambda^2$.
In fact, each light-like line as a generator of the
ruled surface $\check F$ meets $F(\mc U)$ several times
and $\check F$ can  be regarded 
as a kind of double covering the light-cone $\Lambda^2$.
To produce the actual $\Lambda^2$ from the extension of $F(\mc U)$,
we need to consider the quotient space with an appropriate equivalence relation 
in the image of $\check F$.
In this example, the resulting $L$-completion is
the light-cone $\Lambda^2$, which is, of course, a Hausdorff space.
However, in the general situation, 
in order for the domain of definition
of the resulting $L$-complete null wave front 
to be a Hausdorff space, it is necessary to assume 
appropriate conditions on the original null wave front.

From now on, we fix a (co-orientable) null $C^r$-wave  front $F:M^n\to \R^{n+1}_1$
which may not be $L$-complete.
Let
\begin{equation}\label{eq:tau3}
\tau:M^n\ni p \mapsto \hat\tau\circ F(p)\in \R,
\end{equation}
be the restriction of the
height function $\hat\tau$ to the hypersurface $F$.
he following lemma will play an important role:

\begin{lemma} \label{prop:F-R}
Let $F:M^n\to \R^{n+1}_1$ be a 
$($co-orientable$)$ null $C^r$-wave front, and let
$p$ be a point in $M^n$. Then, for each open neighborhood $\tilde U$ of $p$, 
there exist 
\begin{itemize}
\item a neighborhood $U(\subset \tilde U)$ of $p$,
\item a positive number $\epsilon_U^{}\in (0,\infty]$ 
and a real number $t_U^{}$,
\item a $C^r$-differentiable $(n-1)$-submanifold $\Sigma_U$ of $U$,
\item a surjective $C^r$-submersion $\rho_U^{}:U\to \Sigma_U$, and
\item 
a $C^r$-wave front $\tilde g_U^{}:\Sigma_U\to \{0\}\times \R^n_0$
\end{itemize}
such that
\begin{enumerate}
\item 
We set
$
\hat \xi_U(x):=\hat \xi_E|_{\Sigma_U}(x)
$
$(x\in \Sigma_U)$ and denote by
$
\hat \xi_U=(1,\nu_U^{})
$.
Then $\nu_U^{}(x)$ gives a unit normal vector field of $\tilde g_U^{}$ in $\R^n_0$
$($by regarding $\tilde g_U^{}$ is a map into $\R^n_0)$.
\item 
We set
\begin{align} \label{eq:GU}
&G_U:\R\times \Sigma_U\ni (t,x) \mapsto \tilde g_U^{}(x)+t \hat \xi_U(x)\in \R^{n+1}_1
\qquad (x\in \Sigma_U,\,\, t\in \R), \\ \nonumber
&\tau_U(q):=\tau(q)\qquad (q\in U).
\end{align}
Then the map given by
$$
\Phi_U(q):=(\tau_U^{}(q),\rho_U^{}(q))\qquad (q\in U)
$$
is a diffeomorphism between $U$ 
and $I_U\times \Sigma_U$ 
satisfying
\begin{equation}\label{eq:FqU}
F(q)=G_U\circ \Phi_U(q)
=\tilde g_U^{}\circ \rho_U(q)+\tau_U^{}(q) \hat \xi_U\circ\rho_U^{}(q) \qquad (q\in U),
\end{equation}
where $I_U:=(-\epsilon_U^{}+t_U,\epsilon_U^{}+t_U)$ if $\epsilon_U^{}\ne \infty$
and $I_U=\R$ if  $\epsilon_U=\infty$.
\item
The map given by
$$
\mc L_U:\R\times \Sigma_U\ni (t,x)\mapsto (G_U(t,x),\hat \xi_U(x))\in \R^{n+1}_1\times \R^{n+1}_1, 
$$
is an embedding
satisfying
$$
\mc L_F(q)=\mc L_U\circ \Phi_U(q) \qquad (q\in U).
$$
\item
The map defined by
$$
\tilde l_U:\Sigma_U\ni x \mapsto (\tilde g_U^{}(x),\hat \xi_U(x))\in (\{0\}\times \R^{n}_0)\times \R^{n+1}_1
$$
is an embedding.
\end{enumerate}
In this setting, if $F$ itself is $L$-complete and $\tilde U:=M^n$, then 
the above assertions hold by setting $I_U:=\R$.
\end{lemma}

\begin{definition}
In this setting,
if
$\mc L_U:\R\times \Sigma_U\to 
\R^{n+1}_1\times \R^{n+1}_1$ 
is an embedding,
$U$ is called 
an {\it $F$-adapted neighborhood} of
$p$.
 Moreover, 
$(U,\Sigma_U,\tau_U^{},\rho_U^{},\tilde g_U^{},\hat \xi_U,I_U)$ 
is called
the {\it fundamental $F$-data} at $p$. 
\end{definition}

\begin{proof}[Proof of Lemma~\ref{prop:F-R}]
Let  $\xi_E$ be the $E$-normalized null vector field of $F$.
Since
$\hat \xi_E(\ne \mb 0)$ points
in the light-like future direction,
we have
$
d\tau(\xi_E)> 0
$.
In particular, by the implicit function theorem,
the level set 
$$
\tilde \Sigma^{n-1}:=
\tau^{-1}(\tau(p))\cap \tilde U\,(\subset M^n)
$$
is an embedded hypersurface of $\tilde U$.
We set 
\begin{equation}\label{eq:1332}
t_0:=\tau(p).
\end{equation}
Since $\xi_E$ is transversal to
the hypersurface $\tilde\Sigma^{n-1}$,
there exist
\begin{itemize}
\item an open interval of the form $J:=(-\epsilon,\epsilon)$ ($\epsilon>0$)
or $J:=\R$,
\item a connected open submanifold $\Sigma^{n-1}$ of $\tilde \Sigma^{n-1}$
satisfying $p\in \Sigma^{n-1}$, 
and
\item an injective $C^r$-immersion
$$
\tilde \Psi:J\times \Sigma^{n-1}
\ni (t,x) \mapsto \phi_t(x)\in \tilde U
$$
\end{itemize}
such that (cf. \cite[Proposition~A.3]{AHUY})
the map $t \mapsto \phi_t(x)$ ($x\in \Sigma^{n-1}$)
gives an integral curve of $\xi_E$
satisfying $\phi_{0}(x)=x$.
In this setting,
if $F$ is $L$-complete and $\tilde U=M^n$, then $\xi_E$ is a complete
vector field of $M^n$ and we can set $J:=\R$.

Since $\tilde \Psi$ is an injective immersion
between manifolds of the same dimension,
by the inverse function theorem, $\tilde \Psi$ is
an open map and so 
\begin{equation}\label{eq:1377}
U:=\tilde \Psi(J\times \Sigma^{n-1})(\subset \tilde U)
\end{equation}
is a neighborhood of $p$, and $\tilde \Psi$
gives  a diffeomorphism from $J\times \Sigma^{n-1}$ to 
$U$. 
Moreover, if $F$ is $L$-complete and $\tilde U:=M^n$, 
the map $\tilde \Psi$
gives a diffeomorphism between $\R\times \Sigma^{n-1}$
and $M^{n}$ by the same reason why $\Psi$ 
is a diffeomorphism in the proof of
Theorem~\ref{thm:D}.

We define a map by
$$
\rho:U\ni q\mapsto \pi_2\circ \tilde \Psi^{-1}(q)\in \Sigma^{n-1},
$$
where 
$\pi_2$ is the canonical projection of
$J \times \Sigma^{n-1}$ onto $\Sigma^{n-1}$.
Since  $\tau\circ \phi_{t}(x)=t+t_0$ for $x\in \Sigma^{n-1}$
(cf. \eqref{eq:1332}), 
$\rho$ is a surjective submersion satisfying
\begin{equation}\label{eq:t0}
\big(\tau(q)-t_0,\rho(q)\big)=\tilde \Psi^{-1}(q) \qquad (q\in U).
\end{equation}

By Theorem~\ref{prop:10}, 
$F\circ \phi_t(x)$ ($t\in J$,\, $x\in \Sigma^{n-1}$)
lies on a light-like straight line,
and so, the vector $\hat \xi_E\circ \phi_t(x)\in \R^{n+1}_1$
does not depend on  the parameter $t$.
So we can write
$$
(\hat \xi_E(x):=)\hat \xi_E\circ \phi_t(x)=(a(x),\nu(x))\qquad (x\in \Sigma^{n-1}),
$$
where $a(x)>0$ (since $\hat \xi_E$ points in the future direction).
We regard
$\hat\tau^{-1}(t_0)$ is a Euclidean $n$-space.
By the same argument as in the proof of Theorem~\ref{thm:D}
(cf. \eqref{eq:aa1}, \eqref{eq:aa2} and \eqref{eq:aa3}),
we have $a=\pm 1$ and can write
\begin{equation}\label{eq:xn}
\hat \xi_E(x)=(1,\nu(x)), \quad |\nu(x)|=1
\qquad (x\in \Sigma^{n-1})
\end{equation}
such that $(0,\nu)$ gives  a unit normal vector field of
the map
$$
\tilde f:=F|_{\Sigma^{n-1}}:\Sigma^{n-1}
\to \hat\tau^{-1}(t_0)(\subset \R^{n+1}_1).
$$
So $f$ is a frontal in $\hat\tau^{-1}(t_0)$.
Moreover, since $F(\phi_t(x))$ ($x\in \Sigma^{n-1}$)
parametrizes a light-like straight line,
we have that
\begin{equation}\label{eq:F-Psi}
F\circ \tilde \Psi(t,x)=\tilde f(x)+t \hat \xi_E(x) \qquad (t\in J,\,\, x\in \Sigma^{n-1}).
\end{equation}

We next show that $\tilde f$ is a wave front in 
the hyperplane $\hat\tau^{-1}(t_0)$.
Since $F$ is a null wave front 
in $\R^{n+1}_1$, the pair  $\mc L_F:=(F,\hat \xi_E)$ 
is an immersion from 
$M^n$ into $\R^{n+1}_1\times \R^{n+1}_1$
(cf. Lemma \ref{lem:k-lemma0}).
By
\eqref{eq:F-Psi},
we can write
$$
\mc L_F\circ \tilde \Psi(t,x)=\big(\tilde f(x)+ t \hat \xi_E(x),\hat \xi_E(x)\big) 
\qquad ((t,x)\in J\times \Sigma^{n-1}).
$$
By setting $\tilde f(x)=(t_0, f(x))$ ($x\in \Sigma^{n-1}$),
the rank of the matrix 
\eqref{eq:Mt} for $t=t_0$ 
satisfies 
$$
\op{rank}\pmt{
1    &   0                       &  \ldots &  0  \\
\nu   & f_{u_1}+t \nu_{u_1}   & \ldots & f_{u_{n-1}}+t \nu_{u_{n-1}} \\
0    &   0                       &  \ldots &  0  \\
\mb 0 & \nu_{u_1}  & \ldots & \nu_{u_{n-1}}
}
=
\op{rank}\pmt{1 &       0 & \ldots & 0 \\
     \nu &  f_{u_1} & \ldots & f_{u_{n-1}} \\
     0 &       0 & \ldots & 0 \\
     \bf 0 &  \nu_{u_1} & \ldots & \nu_{u_{n-1}} 
}.
$$
Since the right-hand side is equal to $n$, the matrix
$$
\pmt{
     f_{u_1} & \ldots & f_{u_{n-1}} \\
     \nu_{u_1} & \ldots & \nu_{u_{n-1}}
}
$$
is of rank $n-1$ at each point of $\Sigma^{n-1}$.
So $\tilde f$ is a wave front on $\Sigma^{n-1}$.

We consider the map $\tilde g:\Sigma^{n-1}\to \hat\tau^{-1}(0)$ given by
$$
\tilde g(x):=\tilde f(x)-t_0 \hat \xi_E(x)\qquad (x\in \Sigma^{n-1}),
$$
which corresponds to the parallel hypersurface of $f$
having signed equi-distance $-t_0$ in $\R^n_0$. 
Then $\tilde g$ is a wave front in $\hat\tau^{-1}(0)$ whose unit normal 
vector field $\nu$ satisfies
$$
(1,\nu(x))=\hat \xi_E|_{\Sigma^{n-1}}(x)\qquad (x\in \Sigma^{n-1}).
$$
Then, by Theorem~\ref{prop:make},
\begin{equation}\label{eq:G}
G(t,x):=\tilde g(x)+ t 
(1,\nu(x))\qquad (x\in \Sigma^{n-1},\,\, t\in \R)
\end{equation}
is an  $L$-complete null wave front.
We set
$$
I_U:=
\begin{cases}
(-\epsilon+t_0,\epsilon+t_0) & \text{if $\epsilon \ne \infty$}, \\
\R & \text{if $\epsilon=\infty$},
\end{cases}
$$
and consider the following diffeomorphism
$$
\Upsilon: I_U\times \Sigma^{n-1} \ni (t,x)\mapsto 
(t-t_0,x)\in J\times \Sigma^{n-1}.
$$
By \eqref{eq:t0}, it holds that
\begin{equation}\label{eq:1530}
q=\tilde \Psi(\tau(q)-t_0,\rho(q))=\tilde \Psi\circ \Upsilon(\tau(q),\rho(q))\qquad (q\in U).
\end{equation}
By this with \eqref{eq:F-Psi} and \eqref{eq:G},
we have 
\begin{align*}
F(q)&=F\circ \tilde \Psi\circ \Upsilon(\tau(q),\rho(q))=F\circ \tilde \Psi(\tau(q)-t_0,\rho(q)) \\
&=\tilde f\circ \rho(q)+(\tau(q)-t_0)\hat\xi_E\circ \rho(q) \\
&=\tilde g\circ\rho(q)+\tau(q)\hat\xi_E\circ\rho(q) =G(\tau(q),\rho(q))
\end{align*}
for each $q\in U$.
Thus,
by setting 
\begin{align*}
&\Phi_U:=(\tilde \Psi\circ \Upsilon)^{-1},\quad  
\epsilon_U^{}:=\epsilon, \quad
t_U:=t_0,
\quad \rho_U^{}:=\rho, \\
&\nu_U:=\nu,\quad
\hat \xi_U:=(1,\nu_U^{}), \quad
\Sigma_U:=\Sigma^{n-1},\quad G_U:=G,\quad
\tilde g_U^{}:=\tilde g,
\end{align*}
we obtain the desired fundamental data:
In fact, if we set
\begin{align*}
& \mc L_U:\R\times \Sigma^{n-1}\ni
(t,x) \mapsto \big(G_U(t,x),\hat \xi_U(x)\big) 
\in \R^{n+1}_1\times \R^{n+1}_1,\\
& \tilde l_U:\Sigma^{n-1}\ni
x \mapsto \big(\tilde g_U^{}(x),\hat \xi_U(x)\big) 
\in (\{0\}\times \R^n_0)\times \R^{n+1}_1, 
\end{align*}
then
the two maps $\tilde l_U$ and $\mc L_U$ are immersions,
since we have already shown that
$\tilde g_U^{}$ and $G_U$ are wave fronts.

Since every immersion is locally an embedding,
if we choose the open
neighborhood $U(\subset \tilde U)$ of $p$
to be sufficiently small 
(that is, we choose a sufficiently small $\Sigma^{n-1}$, see \eqref{eq:1377}), 
then $\tilde l_U$ gives an embedding.
Then, by
Lemma~\ref{prop:Lf-E},
$\mc L_U$ is also an embedding. 
\end{proof}

By Lemma~\ref{prop:F-R},
for each point $p\in M^n$, there exist
fundamental $F$-data
$$
(U_p,\Sigma_p,\tau_p,\rho_p,\tilde g_p,\hat \xi_p,I_p)
$$
such that $U_p$ is an $F$-adapted neighborhood of $p$
giving an $L$-complete null wave front (cf.~\eqref{eq:GU})
$$
G_p:\R\times \Sigma_p
\ni(t,x) \mapsto \tilde g_p(x)+t \hat \xi_p(x)\in \R^{n+1}_1
$$
and an embedding 
\begin{equation}\label{eq:1781}
\tilde l_p:\Sigma_p\ni x \mapsto 
(\tilde g_p(x),\hat\xi_p(x))\in (\{0\}\times \R^n_0)\times
 \R^{n+1}_1 \subset \R^{n+1}_1\times \R^{n+1}_1,
\end{equation}
where $\hat \xi_p(x)=(1,\nu_p(x))$.
Since $M^n$ satisfies the second axiom of the countability,
there exist an at most countable set $\mc A$
and a family of fundamental $F$-data
$$
\{(U_\lambda,\Sigma_\lambda,\tau_\lambda,\rho_\lambda,
\tilde g_\lambda^{},\hat \xi_\lambda,I_\lambda)\}_{\lambda\in \mc A}
$$
such that
\begin{enumerate}
\item for each $\lambda\in \mc A$, there exists $p_\lambda\in U_\lambda$ 
satisfying 
\begin{align*}
& (U_\lambda,
\Sigma_\lambda,\tau_\lambda,\rho_\lambda,\tilde g_\lambda^{},\hat \xi_\lambda,I_\lambda)
:=(U_p,\Sigma_{p_\lambda},\tau_{p_\lambda},\rho_{p_\lambda},\tilde g_{p_\lambda}^{},
\hat \xi_{p_\lambda},
I_{p_\lambda}), \\
& \tilde g_\lambda^{}:=\tilde g_{p_\lambda}^{},
\end{align*}
\item the map $\tilde l_\lambda:\Sigma_\lambda\to \R^{n+1}_1\times \R^{n+1}_1$
defined by
$$
\tilde l_\lambda(x):=(\tilde g_\lambda^{}(x),\hat \xi_\lambda(x))
\qquad (\lambda\in \mc A)
$$
is an embedding (cf. (4) of Lemma \ref{prop:F-R}),
\item $M^n=\bigcup_{\lambda\in \mc A}U_\lambda$.
\end{enumerate}
In particular, $\{(\Sigma_{\lambda},\tilde l_\lambda)\}_{\lambda\in \mc A}$ 
is a family of embeddings defined on $(n-1)$-dimensional manifolds.
To ensure that the domain of definition of the resulting $\check F$ 
is a Hausdorff space,  we give the following definition:

\begin{definition}\label{def:1688}
In the above setting,
 $F$ is said to be {\it admissible}
if  $\{(\Sigma_{\lambda},\tilde l_\lambda)\}_{\lambda\in \mc A}$ 
is an admissible family as in Definition \ref{def:1668}
in Appendix~B.
\end{definition}

We prove the following:

\begin{theorem}[A structure theorem of null wave fronts]\label{thm:II}
Let $F:M^n\to \R^{n+1}_1$ be an admissible 
null $C^r$-wave front
$($if $F$ is real analytic, i.e. $r=\omega$, it is admissible, see
Lemma~\ref{lem:1653}$)$. Then there exist
\begin{itemize}
\item  a $C^r$-differentiable $(n-1)$-manifold $\check \Sigma^{n-1}$,
\item  a $C^r$-immersion $\mc I:M^n\to \R\times \check \Sigma^{n-1}$,
\item an $L$-complete null wave front
$
\check F:\R\times \check \Sigma^{n-1} \to \R^{n+1}_1
$
written in the normal form and its canonical lift
$
\check {\mc L}:\R\times \check \Sigma^{n-1} \to \R^{n+1}_1\times \R^{n+1}_1
$
\end{itemize}
such that 
$$
\check F\circ \mc I(q)=F(q),\qquad
\check {\mc L}\circ \mc I(q)=\mc L_F(q)\qquad (q\in M^n).
$$
If $\mc L_F$ is an embedding $($see Remark \ref{rmk:1942}$)$, then
$\mc I$ gives a diffeomorphism between $M^n$ and
$\mc I(M^n)$.
Moreover, if $F$ is $L$-complete, then
$\mc I$ is a surjection.
\end{theorem}

\begin{remark}\label{rmk:1942}
Since $F$ is a null wave front,
its canonical lift $\mc L_F$ is 
an immersion.
So, the assumption that $\mc L_F$ is an embedding 
is not so restrictive. 
On the other hand,
even if
$\mc L_F$ is an embedding, the admissibility of $F$
does not hold in general, see Example \ref{pC}.
\end{remark}

\begin{proof}
We consider the disjoint union
$
\mathcal S:=\coprod_{\lambda\in \mc A} \Sigma_\lambda,
$
and give a relation $x\sim y$ for 
$x,y\in \mathcal S$
so that $x\sim y$ implies that
there exists a pair $(\lambda,\mu)\in\mc A\times \mc A$ of indices
such that
\begin{itemize}
\item $\Sigma_\lambda$ is a neighborhood  
of $x$, 
\item $\Sigma_\mu$ is a neighborhood of $y$, and
\item $x$ is $(\tilde l_\lambda,\tilde l_\mu)$-related to $y$
in the sense of Definition \ref{def:UV1601}.
\end{itemize} 
As seen in the proof of
Proposition \ref{prop:1671}, 
the symbol $\sim$ gives an equivalence relation,
and $\check\Sigma^{n-1}:=\mathcal S/{\sim}$ is an $(n-1)$-manifold.
Then 
$
\pi:\mathcal S
\to \check \Sigma^{n-1}
$
is
the canonical projection
as an open map.
We set 
$$
\phi_\lambda:=\pi|_{\Sigma_\lambda}.
$$
Then $\{(\Sigma_\lambda,\phi_\lambda)\}_{\lambda\in \mc A}$
is the differentiable structure of $\check\Sigma^{n-1}$ as
shown in the proof of Proposition \ref{prop:1671}. 
Moreover, 
an immersion
$
\check l:\check\Sigma^{n-1} \to \R^{n+1}_1\times \R^{n+1}_1
$
is induced which satisfies 
$$
\check l \circ \phi_\lambda^{}(x)=(\tilde g_\lambda^{},\hat \xi_\lambda)
(=\tilde l_\lambda)
\qquad (x\in \Sigma_\lambda,\,\,
\lambda\in \mc A).
$$
We can write
$$
\check l(\check x)=(\tilde g(\check x),\hat\xi(\check x))
\qquad (\check x\in \check\Sigma^{n-1}).
$$
Then, by definition, 
we have 
$$
\tilde g\circ \phi_\lambda^{}(x)=\tilde g_\lambda^{}(x),\quad
\hat \xi\circ \phi_\lambda(x)=\hat \xi_\lambda(x) \qquad (x\in \Sigma_\lambda)
$$
So, if we set 
\begin{align*}
\check F(t,\check x)&:=\tilde g(\check x)+ t \hat \xi (\check x)
\qquad 
((t,\check x)\in \R\times \check\Sigma^{n-1}), \\
G_\lambda(t,x)&:=\tilde g_\lambda^{}(x)+t\hat \xi_\lambda(x)
\qquad (t\in \R,\, x\in \Sigma_\lambda),
\end{align*}
then it holds that
\begin{align}\label{eq:1968}
\check F(t,\phi_\lambda(x))
&=\tilde g\circ \phi_\lambda(x)+ t \hat \xi \circ \phi_\lambda^{}(x) \\ \nonumber
&=\tilde g_\lambda^{}(x)+ t \hat \xi_\lambda(x)
=G_\lambda(t,x)
\end{align}
for $t\in \R$ and $x\in \Sigma_\lambda$.
So, if we consider
the  maps  defined by
\begin{align*}
&\check{\mc L}:\R\times \check\Sigma^{n-1}\ni (t,\check x)\mapsto  
\Big (\tilde g(\check x)+ t \hat \xi (\check x),
\hat \xi (\check x)\Big)\in \R^{n+1}_1\times \R^{n+1}_1, \\
&\mc L_\lambda:\R\times \Sigma_\lambda\ni
(t,x) \mapsto \big(G_\lambda(t,x),\hat \xi_\lambda(x)\big) 
\in \R^{n+1}_1\times \R^{n+1}_1,
\end{align*}
then we have
$$
\check{\mc L}(t,\phi_\lambda(x))
=\mc L_\lambda(t,x)\qquad (t\in \R,\,\,x\in \Sigma_\lambda).
$$
We set
$$
\check \Sigma_\lambda:=\phi_\lambda(\Sigma_\lambda).
$$
Since $\mc L_\lambda$
is an embedding (see (3) of Lemma  \ref{prop:F-R}), 
then the restriction $\check{\mc L}|_{\R\times \check \Sigma_\lambda}$
is also an embedding for each $\lambda\in \mathcal A$.
In particular,
$\check{F}$ is an $L$-complete null wave front on 
$\R\times \check \Sigma^{n-1}$.

We then fix $p\in M^n$ arbitrarily, and assume that
$p$ belongs to $U_\lambda$ for some $\lambda\in \mc A$.
By \eqref{eq:FqU} and \eqref{eq:1968},
we have
\begin{align*}
F(q)
&=\tilde g_\lambda^{}\circ \rho_\lambda^{}(q)+
\tau_\lambda^{}(q)\, \hat \xi_\lambda\circ \rho_\lambda(q) \\
&=G_\lambda(\rho_\lambda^{}(q),\tau_\lambda^{}(q))
=\check F(\tau_\lambda^{}(q),\phi_\lambda^{}\circ \rho_\lambda^{}(q))
\qquad (q\in U_\lambda).
\end{align*}
So it holds that
\begin{equation}\label{eq:2004}
\mc L_F(q)=\check {\mc L}(\tau_\lambda^{}(q),\phi_\lambda^{}\circ \rho_\lambda^{}(q)) 
\qquad (q\in U_\lambda).
\end{equation}
Since $\mc L_F$ and $\check{\mc L}$ are immersions,
the map
\begin{equation} \label{eq:2016}
\mc I_\lambda:U_\lambda\ni q\mapsto (\tau_\lambda^{}(q),\phi_\lambda^{}\circ \rho_\lambda^{}(q))
\in \R\times \check \Sigma_\lambda
\end{equation} 
is also an immersion.

If $p\in U_\mu$
for some other index $\mu\in \mc A$, then 
the immersion 
$ 
\mc I_\mu:U_\mu\to \R\times \check \Sigma_\mu
$
is also induced.
Since $\check {\mc L}|_{\R\times\check \Sigma_\lambda}$ 
and $\check {\mc L}|_{\R\times \check \Sigma_\mu}$
are embeddings,
so is $\check {\mc L}|_{\R\times (\check \Sigma_\lambda\cap\check \Sigma_\mu)}$.
Thus $\mc I_\lambda$ coincides with $\mc I_\mu$ on 
$U_\lambda\cap U_\mu$, 
and the map
$ 
\mc I:M^n\to \R\times \check \Sigma^{n-1}
$
is canonically induced. By \eqref{eq:2004} and
\eqref{eq:2016},
$$
\mc L_F(q)=\check {\mc L}\circ \mc I(q) \qquad (q\in M^n)
$$
holds. In particular,
$\check F\circ \mc I=F$ also holds on $M^n$.
We now consider the case that $\mc L_F$ is an embedding.
Suppose that $\mc I(p)=\mc I(q)$ ($p,q\in M^n$).
Then we have 
$$
\mc L_F(p)=\check {\mc L}\circ \mc I(p)=\check {\mc L}\circ \mc I(q)=\mc L_F(q).
$$
Since $\mc L_F$ is an embedding, 
we have $p=q$, proving the injectivity of $\mc I$.
Since $\mc I$ is an immersion between same dimensional manifolds, it is
a diffeomorphism.

If $F$ is $L$-complete, then 
$\tau_\lambda^{}(U_\lambda)=\R$
and 
$\rho_\lambda^{}(U_\lambda)=\Sigma_\lambda$
for each $\lambda\in \mc A$, which imply that
the map $\mc I_\lambda$
is surjective (cf. \eqref{eq:2016}).
So we can conclude that $\mc I$ is a surjection.
\end{proof}

\begin{example}
We set $\R^2_*:=\R^2\setminus \{(0,0)\}$,
and consider a null immersion
$$
F:\R^2_*\ni (u,v)\mapsto (u^2+v^2, 2uv,u^2-v^2)\in (\R^3_1;t,x,y),
$$
whose image lies on the light-cone $\Lambda^2$ passing through the origin
in $\R^3_1$. 
In this case, the image of the $L$-completion 
of $F$ is the light-cone $\Lambda^2$ and so the map $F$
covers twice on $\Lambda^2\cap \{t>0\}$ in $\R^3_1$.
In particular, the induced map
$\mc I$ is not a diffeomorphism.
This example shows that
if we drop the condition that $\mc L_F$ is injective,
the injectivity of $\Phi$ does not follow, in general.
\end{example}

\begin{figure}[htb]
 \begin{center}
\includegraphics[height=2.7cm]{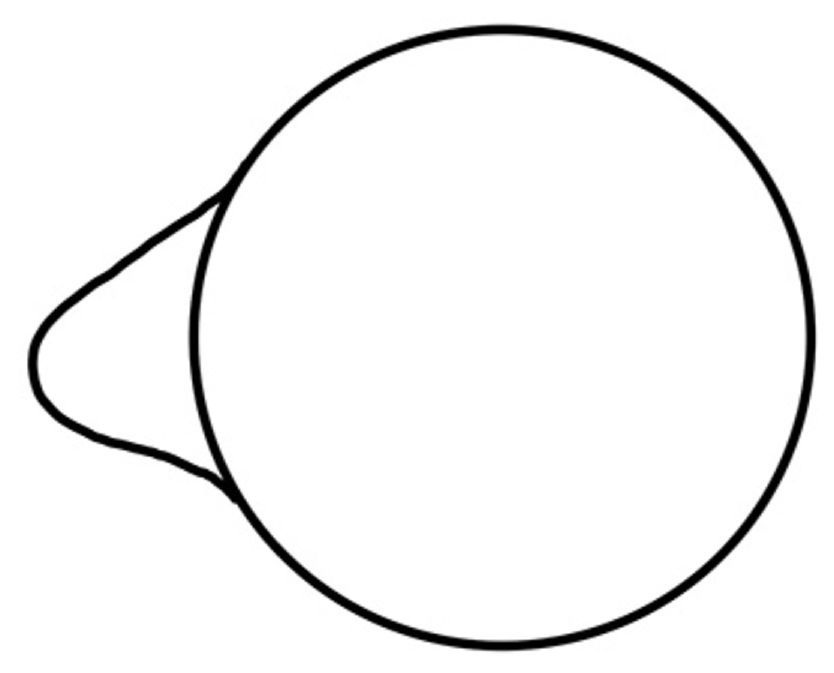}

\caption{The image of the curve $\gamma$
in Example \ref{pC}.}
\label{fig:gamma}
\end{center}
\end{figure}

\begin{example}\label{pC}
Consider a plane curve (cf. Figure \ref{fig:gamma})
$$
\gamma(t):=e^{\omega(t)}(\cos t,\sin t) \qquad (0\le t\le 4\pi),
$$
where $\omega(t)$ is a $C^\infty$-function into $[0,1]$
such
that $\omega(t)=0$ for $t\not \in (3\pi-\epsilon,3\pi+\epsilon)$
and $\omega(t)>0$ for $t\in (3\pi-\epsilon,3\pi+\epsilon)$,
where $\epsilon$ is a sufficiently small positive number.
Then $\gamma$ generates the $L$-complete null wave front $\mathcal F^\gamma_+$
(cf. Theorem~\ref{prop:make}).
By the construction of $\gamma$,
$\mathcal F^\gamma_+$ is not admissible, since
the image of $\mathcal F^\gamma_+$ has non-transversal intersections.
We then set
$$
\Gamma(t):=(0,\gamma(t))+ t(1,\nu(t))\qquad (0\le t\le 4\pi),
$$
where $\nu(t)$ is a unit normal vector field of $\gamma(t)$.
The image $C:=\Gamma([0,4\pi])$ of $\Gamma$ 
is an embedded spiral-shaped curve
on image of $\mathcal F^\gamma_+$.
Consider a sufficiently small tubular neighborhood $\mc U_C$
of $C$ in the image of $\mathcal F^\gamma_+$.
Then $\mc U_C$ is an embedded null surface,
because $\Gamma$ has no self-intersections.
If we think that $F$ is the inclusion map $\mc U_C\hookrightarrow \R^3_1$,
then the $L$-completion of $F$ is $\mathcal F^\gamma_+$.
So, this example shows that the admissibility 
of the $L$-completion of $F$ in 
Theorem~\ref{thm:II}
is independent of
the embeddedness of the canonical lift $\mc L_F$.

Moreover, 
if we apply our $L$-completion procedure 
for $\mc U_C$, then the resulting quotient space 
is not a Hausdorff space, which
implies the necessity of the admissibility condition as in
Definition \ref{def:1688}.
\end{example}

\section{Classification of complete null wave fronts}

In this section, we define  \lq\lq completeness'' 
of null wave fronts and prove a structure theorem of
them. 
The content of this section is completely independent of  Section~3.

\begin{definition}\label{def:complete}
A null $C^r$-wave front $F:M^n\to \R^{n+1}_1$ 
is said to be {\it complete}
if 
\begin{itemize}
\item $F$ is $L$-complete, and
\item the singular set of $F$ is 
      a non-empty compact subset of $M^n$.
\end{itemize}
\end{definition}

\begin{remark}\label{Rmk:2014}
As proved in \cite{AHUY},
if $F$ is $L$-complete and its singular set
is empty, then the image $F(M^n)$
is a subset of a light-like hyperplane of $\R^{n+1}_1$,
which is a trivial case.
So we only consider the case that
the singular set of $F$ is 
non-empty, in the above definition.
\end{remark}

\begin{remark}
When $n=2$, a null wave front $F$ in $\R^3_1$ is
a surface with vanishing Gaussian curvature 
(cf. \cite[Appendix]{AUY}).
If $F$ is complete, then it is complete 
as a flat front 
in the Euclidean 3-space 
in the sense of  Murata-Umehara \cite{MU}.
\end{remark}

We prove the following:

\begin{theorem}\label{ThmE}
Let $F$ be a null $C^r$-wave  front
in $\R^{n+1}_1$.
If $F$ is complete, 
then it can be 
reparametrized as a normal form $\mc F^f_+$,
where 
$f:\Sigma^{n-1}\to \R^{n}_0$ 
is an immersion defined on
a compact $(n-1)$-manifold $\Sigma^{n-1}$
whose principal curvatures are non-zero and all same sign
everywhere in $\R^n_0$.
In particular, if $n\ge 3$, then $f$
is a compact convex hypersurface in $\R^{n}_0$.
On the other hand, if $n=2$,
then $f$ is a closed locally convex regular curve in $\R^2_0$.
\end{theorem}

\begin{proof}
We let $F:M^n\to \R^{n+1}_1$ be a complete null wave front.
By  Theorem~\ref{thm:D},
we may assume that
there exist
\begin{itemize}
\item an $(n-1)$-manifold $\Sigma^{n-1}$,
\item a diffeomorphism $\Phi:M^n\to \R\times \Sigma^{n-1}$, 
and\item  a co-orientable wave front $f:\Sigma^{n-1}\to \R^{n}_0$
\end{itemize}
such that $F\circ \Phi^{-1}$ coincides with the normal form 
$\mc F^f_+$ given in Theorem~\ref{prop:make}.
We  consider the height function
$\hat \tau:\R^{n+1}_1 \to \R$
given in \eqref{eq:tau}.
Since the singular set of $F$ is compact,
for sufficiently large $t_0$, the restriction 
$$
\tilde f:=F\circ \Phi^{-1}|_{\Phi(\tau^{-1}(t_0))}
:\Sigma^{n-1}\to \hat \tau^{-1}(t_0)
$$
is an immersion such that $\tilde f(x)=(t_0,f(x))$.
Without loss of generality, we may assume that
$t_0=0$ and $F$ satisfies
\begin{equation}\label{eq:Ftp}
F\circ \Phi^{-1}(t,x)=\tilde f(x)+t \hat \xi_E(x), 
\quad
\hat \xi_E(x):=(1,\nu(x))
\qquad (t\in \R,\,\, x\in \Sigma^{n-1}),
\end{equation}
where $\nu$ can be considered as
the unit normal vector field along the immersion $f$.
So, we may assume that $F$ is defined on $\R\times \Sigma^{n-1}$.
Since $f$ is co-orientable and is an immersion,
$\Sigma^{n-1}$ is orientable.
We let  
$$
\lambda_1\le \cdots \le \lambda_{n-1}
$$
be principal curvature functions of  $f$.
Here, $\{\lambda_i(x)\}_{i=1}^{n-1}$
are eigenvalues of 
a symmetric matrix associated with the shape operator of $f$
at $x$.
Since the characteristic polynomial of a real symmetric matrix
consists only of real roots, 
the well-known fact that the roots of a polynomial 
depend continuously on its coefficients 
implies that 
$\{\lambda_i\}_{i=1}^{n-1}$
can be considered as 
a family of real-valued continuous
functions defined on $\Sigma^{n-1}$.
We first show that each $\lambda_i$ never changes sign:
Suppose that
there exists a sequence of points $\{x_k\}_{k=1}^\infty$
which converges to a point $x_\infty$ such that
$$
\lambda_i(x_k)\ne 0 \,\, \text{ and }\,\, \lambda_i(x_\infty)=0.
$$
Then
$$
\left(\frac1{\lambda_i(x_k)},x_k\right)\in \R\times \Sigma^{n-1}
\qquad (k=1,2,3,\ldots)
$$
are singular points of $F$ which are unbounded on $\R\times \Sigma^{n-1}$,
contradicting the compactness of the singular set of $F$.
Thus, each $\lambda_i$ 
as a continuous function on $\Sigma^{n-1}$
has no zeros unless it is identically zero.
By Remark \ref{Rmk:2014}, $f$ is not a part of a hyperplane in $\R^n_0$.
So, there exists an integer $r$ ($1\le r\le n-1$)
such that 
$
\lambda_{i_s}\,\, (s=1,\ldots,r)
$
are not identically zero, where $\{i_1,\ldots,i_r\}$
is a subset of $\{1,\ldots,n-1\}$.
By Hartman's product theorem
(cf. \cite[Page 347]{kn2}),
$\Sigma^{n-1}$ is a product
of a compact manifold and $\R^l$ ($l:=n-1-r$).
For each $s\in \{1,\ldots,r\}$,
we can define a 
continuous map $\psi_s:\Sigma^{n-1}\to \R\times \Sigma^{n-1}$
by
$$
\psi_{s}(x):=\left(\frac{1}{\lambda_{i_s}(x)},x\right)\in 
\R\times \Sigma^{n-1}.
$$
Then
$
S:=\bigcup_{s=1}^r \psi_{s}(\Sigma^{n-1})
$
coincides with the singular set of $F$.
Since $F$ is complete null wave front, 
$S$ is compact. 
Since the projection of $S$ via the
continuous map 
$$
\R\times \Sigma^{n-1} \ni (t,x) \mapsto x\in \Sigma^{n-1}
$$
is compact, the hypersurface $\Sigma^{n-1}$ is
also compact.
Thus $r=n-1$ (i.e. $l=0$) and each $\lambda_i$
($i=1,\ldots,n-1$) is either positive-valued or
negative-valued on $\Sigma^{n-1}$.
Moreover, since $\Sigma^{n-1}$ is compact,
there exists a point $y_0\in \Sigma^{n-1}$
such that $f(y_0)$ attains
the farthest point of the image of $f$ 
from the origin in $\R^n_0$.
Then $\lambda_1(y_0),\,\, \dots,\,\, \lambda_{n-1}(y_0)$
are all positive or all negative at the same time.

Here, replacing $\nu$ by $-\nu$, we may assume that
$$
\lambda_{n-1}(x)>0 \qquad (x\in \Sigma^{n-1}).
$$
Then, $\lambda_1, \dots, \lambda_{n-1}$
take the same sign on $\Sigma^{n-1}$.
Thus, Hadamard's theorem \cite{H} implies that
$f$ is an embedded convex hypersurface in $\R^n_0$ whenever $n\ge 3$.
\end{proof}

If $n=2$, 
then $f$ is a locally strictly convex regular curve in $\R^2_0$.
So we may express $f$ by
$\gamma(s)$ $(s\in \R)$ 
defined as an $l$-periodic curve ($l>0$), 
parametrized by the arc-length.
Then its curvature function can be taken 
so that $\kappa(s)$ is positive everywhere.
Then we may assume that $F$ is 
expressed as
$$
F(t,s)=(t, \gamma_t(s))\qquad (s, t\in \R),
$$
where
$
\gamma_t(s):=\gamma(s)+t \nu(s)
$
is a parallel curve of $\gamma$,
and the singular set of $F$ is
$$
S:=\{(1/\kappa(s),s)\,;\, s\in \R \},
$$
and
\begin{equation}\label{eq:Cs}
C_\gamma(s):=F\left(\frac1{\kappa(s)},s\right)
\end{equation}
just parametrizes the singular set image of $F$.
We can prove the following:

\begin{proposition}\label{thm:V}
Let $F$ be a complete null wave front in $\R^3_1$
which is generated by a closed locally convex regular
curve $\gamma$ in $\R^2_0$.
Then non-cuspidal edge points of $F$
correspond to vertices on $\gamma$,
where a {\em vertex} is a critical point of
the curvature function of $\gamma$.
\end{proposition}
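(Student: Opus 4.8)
The plan is to compute everything directly in the parametrization $F(t,s)=(t,\gamma(s)+t\nu(s))$ fixed just before the statement, and then read off the cuspidal-edge condition from the classical criterion for fronts (cf. \cite{SUY}). First I would record the Frenet relations for the arc-length curve $\gamma$: writing $T:=\gamma'$ for the unit tangent, one has $T'=\kappa\nu$ and $\nu'=-\kappa T$ (this sign is forced by the fact that the singular set is $\{(1/\kappa(s),s)\}$). Differentiating $F$ then gives
$$F_t=(1,\nu),\qquad F_s=\big(0,(1-t\kappa(s))T\big)=(1-t\kappa(s))\,(0,T),$$
so that $F_t$ never vanishes while $F_s$ vanishes exactly on $S=\{(1/\kappa(s),s)\,;\,s\in T^1\}$. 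Consequently $dF$ has rank one along $S$, and its kernel (the null direction) is spanned by $\partial_s$ there.

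Next I would verify that every singular point is non-degenerate, so that the cuspidal-edge/swallowtail dichotomy applies. The function $\lambda(t,s):=1-t\kappa(s)$ cuts out $S$ and satisfies $d\lambda=-\kappa\,dt-t\kappa'\,ds$; since $\gamma$ is locally convex we have $\kappa>0$, hence $d\lambda\neq\mb 0$ on $S$. Thus $S$ is a regular curve in the domain, which I would parametrize by $\sigma(s):=(1/\kappa(s),s)$, with tangent
$$\sigma'(s)=\left(-\frac{\kappa'(s)}{\kappa(s)^2},\,1\right).$$

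Finally I would invoke the standard criterion: at a non-degenerate singular point, $F$ has a cuspidal edge precisely when the null direction is transverse to the singular curve $\sigma$, and hence is a \emph{non}-cuspidal edge precisely when the null direction is tangent to $\sigma$. Since the null direction is $\eta=\partial_s=(0,1)$, transversality is measured by
$$\det\pmt{-\kappa'(s)/\kappa(s)^2 & 0\\ 1 & 1}=-\frac{\kappa'(s)}{\kappa(s)^2},$$
which vanishes if and only if $\kappa'(s)=0$. Thus $\sigma(s)$ fails to be a cuspidal edge exactly when $s$ is a critical point of $\kappa$, i.e. a vertex of $\gamma$, proving the claim. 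The only genuinely delicate point is the invocation of the front criterion: I must note that applying the Euclidean singularity theory to the map $F$ into $\R^3_1$ is harmless, since being a cuspidal edge is a diffeomorphism-invariant property of the underlying smooth map and is independent of the ambient metric, and that the non-degeneracy established above guarantees the clean equivalence between ``non-cuspidal edge'' and ``null direction tangent to $S$'' (the remaining singularities being at worst swallowtails).
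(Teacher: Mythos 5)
Your proof is correct and takes essentially the same route as the paper: both arguments reduce the statement to the cuspidal-edge criterion of Kokubu--Rossman--Saji--Umehara--Yamada applied to the explicit normal form $F(t,s)=(t,\gamma(s)+t\nu(s))$, whose singular set is $\{(1/\kappa(s),s)\}$. The only difference is one of formulation---the paper tests the criterion via the regularity of the singular image curve $C_\gamma(s)=F(1/\kappa(s),s)$, whose velocity is a nonzero multiple of $\kappa'(s)$, whereas you test the equivalent domain-side condition that the null direction $\partial_s$ be transverse to the singular curve $\sigma(s)=(1/\kappa(s),s)$; your write-up also makes explicit the non-degeneracy check and both directions of the equivalence, which the paper leaves implicit.
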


\begin{proof}
In the setting of \eqref{eq:Cs},
$C'_\gamma(s)\ne \mb 0$
if and only if
$\kappa'(s_0)\ne 0$, and so
$F$ has a cuspidal  edge singular point at
$(1/\kappa(s_0),s_0)$ 
only when $\kappa'(s_0)\ne 0$
(see the criterion in \cite{KRSUY} for cuspidal edge).
\end{proof}

The following assertion is equivalent to
the classical four vertex theorem for convex plane curve:

\begin{corollary}\label{cor:V}
Let $F$ be a complete null wave front in $\R^3_1$
which has no self-intersections outside of
a compact subset of $\R^3_1$. 
Then $F$ has at least four non-cuspidal edge 
singular points.
\end{corollary}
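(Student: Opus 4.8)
The plan is to reduce the statement to the classical four vertex theorem by means of the curve $\gamma$ produced by Theorem~\ref{ThmE} together with the singular-point dictionary of Proposition~\ref{thm:V}. Since $F$ is a complete null wave front in $\R^3_1$ which is not a plane, the case $n=2$ of Theorem~\ref{ThmE} yields an immersed closed locally convex regular curve $\gamma:T^1\to\R^2_0$ with curvature $\kappa>0$ everywhere, together with the normal form $F(t,s)=(t,\gamma_t(s))$, $\gamma_t(s)=\gamma(s)+t\nu(s)$, as recorded just before Proposition~\ref{thm:V}. I would first observe that $F(t_1,s_1)=F(t_2,s_2)$ forces $t_1=t_2$ and $\gamma_{t_1}(s_1)=\gamma_{t_1}(s_2)$, so self-intersections of $F$ are exactly self-intersections of the parallel curves $\gamma_t$ at a common height~$t$.

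The first substantive step is to use the hypothesis that $F$ has no self-intersections outside a compact subset of $\R^3_1$ to force $\gamma$ to be a simple embedded convex curve. Writing $\gamma'=T$ and $\nu'=-\kappa T$ for the Frenet frame, one computes $\gamma_t'=(1-t\kappa)T$; hence for $t>\max_s 1/\kappa(s)$ the parallel $\gamma_t$ is a regular closed curve whose unit tangent points along $-T$, so its turning number equals the turning number $k:=\frac{1}{2\pi}\int_{T^1}\kappa\,ds\in\mathbb{Z}_{>0}$ of $\gamma$. If $k\ge 2$, then by Hopf's Umlaufsatz such a regular closed curve cannot be simple, so $\gamma_t$ has a self-intersection for every large $t$; since $|\gamma_t|\to\infty$ as $t\to\infty$, these yield self-intersections of $F$ arbitrarily far from the origin, contradicting the hypothesis. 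Therefore $k=1$, and a locally convex closed plane curve of turning number one is an embedded convex curve.

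Next I would invoke Proposition~\ref{thm:V}, according to which the non-cuspidal edge singular points of $F$ are precisely the points $(1/\kappa(s),s)$ with $\kappa'(s)=0$; that is, they are in bijection with the vertices of $\gamma$. Consequently the number of non-cuspidal edge singular points of $F$ equals the number of vertices of the simple closed convex curve $\gamma$. Applying the classical four vertex theorem, which asserts that every simple closed convex plane curve has at least four vertices, I obtain at least four non-cuspidal edge singular points of $F$, completing the proof.

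The main obstacle is the second paragraph: passing from the three-dimensional ``embedded ends'' condition on $F$ to the turning-number-one condition on $\gamma$. The delicate point is to verify that the turning number of the outer parallels $\gamma_t$ is genuinely $k$ and that the resulting self-intersections escape every compact set, so that the hypothesis is actually violated when $k\ge2$. Once $\gamma$ is known to be simple and convex, the conclusion is a direct translation through Proposition~\ref{thm:V} followed by the classical theorem, and no further analysis of the front $F$ is required.
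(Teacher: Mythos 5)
Your proof is correct and follows essentially the same route as the paper: reduce via Theorem \ref{ThmE} to a locally convex closed curve $\gamma$, use the no-self-intersection hypothesis to conclude that $\gamma$ is a simple convex curve, and then combine Proposition \ref{thm:V} with the classical four vertex theorem. The only place you go beyond the paper is your second paragraph: the paper merely \emph{asserts} that the absence of self-intersections outside a compact set forces $\gamma$ to be an embedded convex curve, whereas you supply an actual argument (self-intersections of $F$ occur only at a common height $t$, the distant parallels $\gamma_t$ have the same turning number as $\gamma$, and Hopf's Umlaufsatz rules out turning number $\ge 2$), and that argument is sound.
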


\begin{proof}
Since
$F$ has  no self-intersections outside of a compact
set, $F$ must be generated by a (closed) convex plane curve 
$\gamma$ in the $xy$-plane.
By the classical four vertex theorem,
$\gamma$ has at least four vertices, then such points
corresponds to the non-cuspidal edge points of $F$
as seen in the proof of Proposition~\ref{thm:V}.
So we obtain the assertion.
\end{proof}

Since a complete null wave front in $\R^3_1$
can be considered as a complete flat front in the Euclidean
$3$-space, Corollary \ref{cor:V} is a special case of
the four non-cuspidal edge point theorem given in
\cite[Theorem D]{MU}.

\begin{example}
We consider the complete null wave front  $F_a$ 
associated with the ellipse as in the introduction
which has four vertices when $0<a<1$.
They correspond to the swallowtail singular points of 
the complete null wave front in Figure \ref{fig:F0}
in the introduction, 
which satisfies the assumption of Corollary \ref{cor:V}.
\end{example}

\begin{figure}[htb]
 \begin{center}
\includegraphics[height=3.6cm]{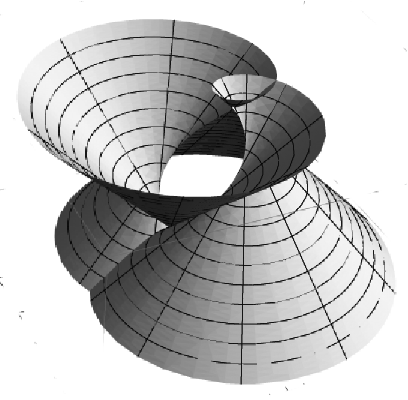}

\caption{The null wave front associated with 
a locally convex curve with exactly one crossing.
}
\label{F1}
\end{center}
\end{figure}

\begin{example}
Consider a locally convex plane curve with a self-intersection
$$
\gamma(\theta):=(1-2\sin \theta)(\cos \theta,\sin \theta)
\qquad (0\le \theta\le 2\pi),
$$
which admits only two vertices. These vertices 
correspond to two swallowtail singular points of 
the corresponding null wave front as in
Figure \ref{F1}.
\end{example}

\appendix
\section{A lemma of subspaces in $\R^{n+1}_1$}

We denote by $\inner{}{}$ the canonical
Lorentzian inner product of $\R^{n+1}_1$.
In this section, we prepare the following assertion for vector subspaces
in $\R^{n+1}_1$.

\begin{definition}
A subspace $V$ of $\R^{n+1}_1$ is said to be
{\it perpendicular} to a subspace $W$ if 
$
\inner{\mb v}{\mb w}=0\,\, (\mb v\in V,\,\, \mb w\in W)
$
holds.
\end{definition}

For a subspace $V$ of $\R^{n+1}_1$, 
we set
\begin{equation}\label{eq:V-perp}
V^\perp:=\{\mb x\in \R^{n+1}_1\,;\, \inner{\mb x}{\mb v}=0\,\, \text{for all $\mb v\in V$}\},
\end{equation}
which is the maximal subspace perpendicular to $V$ in $\R^{n+1}_1$.

\begin{definition}
A subspace 
$V$ of $\R^{n+1}_1$ is said to be {\it degenerate}
if there exists a non-zero vector $\mb v\in V$
such that
$
\inner{\mb v}{\mb w}=0
$
for all $\mb w\in V$. We call such a vector $\mb v$ a {\it degenerate vector} of $V$.
On the other hand,
a subspace
$V(\subset \R^{n+1}_1)$ is said to be  {\it non-degenerate}
if it is not degenerate.
\end{definition}

\begin{lemma}\label{prop:VWL}
Let $V,W$ and $N$ be three subspaces of $\R^{n+1}_1$
such that
\begin{enumerate}
\item $N$ is a 1-dimensional degenerate subspace
satisfying $N\cap W=\{\mb 0\}$,
\item $V$ and $W$ are perpendicular to $N$, and
\item $W$ is perpendicular to $V$.
\end{enumerate}
Then $V\cap W=\{\mb 0\}$.
\end{lemma}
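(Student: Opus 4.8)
The plan is to take an arbitrary $\mb x\in V\cap W$ and show that it must be the zero vector, using only the three hypotheses together with the special structure of the orthogonal complement of a null line (notation as in \eqref{eq:V-perp}).

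First I would record the two facts that do the work. Since $\mb x$ lies in both $V$ and $W$, and $W$ is perpendicular to $V$ by (3), we obtain $\inner{\mb x}{\mb x}=0$; that is, $\mb x$ is a null vector. Next, since $\mb x\in W$ and $W$ is perpendicular to $N$ by (2), we have $\mb x\in N^\perp$. Thus $\mb x$ is a null vector lying in $N^\perp$, and the whole argument reduces to understanding such vectors.

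The crucial step is to analyze $N^\perp$ when $N$ is light-like. Let $\mb n$ be a generator of $N$, so that $\inner{\mb n}{\mb n}=0$. Because the ambient Lorentzian inner product is non-degenerate we have $(N^\perp)^\perp=N$, and since $\mb n$ is null we have $N\subset N^\perp$; hence the radical $N^\perp\cap(N^\perp)^\perp$ of $N^\perp$ equals $N$. Working in coordinates with $\mb n=\mb e_0+\mb e_1$ shows directly that the restriction of $\inner{\,}{\,}$ to $N^\perp=\{x_0=x_1\}$ is positive semi-definite with kernel exactly $N$. Consequently every null vector of $N^\perp$ must lie in the radical $N$. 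Applying this to $\mb x$, which is null and lies in $N^\perp$, yields $\mb x\in N$.

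Finally, $\mb x\in W$ and $\mb x\in N$, so $\mb x\in N\cap W=\{\mb 0\}$ by (1), giving $\mb x=\mb 0$ and hence $V\cap W=\{\mb 0\}$. I expect the only non-formal point to be the description of $N^\perp$ as a positive semi-definite subspace whose kernel is precisely $N$; this is exactly where the light-likeness of $N$ in hypothesis (1) enters, since for a non-degenerate $N$ one would instead have $N^\perp$ non-degenerate (cf. Fact \ref{fact:Oneil}), and a null $\mb x\in N^\perp$ would no longer be forced into $N$.
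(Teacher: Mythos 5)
Your proof is correct, but it follows a genuinely different route from the paper's. The paper argues by a case distinction on whether $W$ is degenerate: if $W$ is non-degenerate, Fact \ref{fact:Oneil} gives $V\cap W\subset W^\perp\cap W=\{\mb 0\}$ directly; if $W$ is degenerate, the paper picks a degenerate vector $\mb v_0\in W$, forms $\tilde W:=W+N$, observes that $\tilde W$ contains the two linearly independent null vectors $\mb v_0$ and $\xi$ (independence coming from $N\cap W=\{\mb 0\}$) and is therefore time-like, hence non-degenerate, and applies Fact \ref{fact:Oneil} to $\tilde W$, using $V\subset \tilde W^\perp$. Your argument instead works pointwise on an arbitrary $\mb x\in V\cap W$ and rests on a single structural fact about null hyperplanes: the restriction of $\inner{\,}{\,}$ to $N^\perp$ is positive semi-definite with radical exactly $N$, so a null vector of $N^\perp$ must lie in $N$; this is a correct and standard fact (your coordinate computation with $\mb n=\mb e_0+\mb e_1$, together with the Cauchy--Schwarz inequality for semi-definite forms, establishes it), and it is the same phenomenon underlying Theorem \ref{prop:10} of the paper. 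Two things are worth noting. First, your proof avoids both the case analysis and the appeal to the classification facts from O'Neill, so it is more self-contained and elementary. Second, you never use the part of hypothesis (2) asserting that $V$ is perpendicular to $N$ --- only $W\perp N$, $W\perp V$ and $N\cap W=\{\mb 0\}$ enter your argument --- whereas the paper's proof does use $V\perp N$ essentially (to place $V$ inside $\tilde W^\perp$). So your argument in fact proves a slightly stronger statement, with one hypothesis weakened; this is a legitimate strengthening, not an oversight.
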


\begin{proof}
If $W$ is non-degenerate, then so is $W^\perp$,
and using (3), we have
$$
\{\mb 0\}=W^\perp \cap W\supset V \cap W,
$$
which implies $V\cap W=\{\mb 0\}$.
So we may assume that $W$ is degenerate. 
By definition,
there exists a degenerate vector $\mb v_0$
belonging to $W$. 
We let $\xi\in N\setminus \{\mb 0\}$ be a generator of 
the 1-dimensional vector $N$.
Then $\mb v_0$ and $\xi$ are both degenerate vectors.
In particular, they are light-like, that is,
$\inner{\mb v_0}{\mb v_0}=\inner{\xi}{\xi}=0$ hold.
So if we set $\tilde W:=W+N$, then 
by (1), $\tilde W$ contains two
linearly independent light-like vectors
$\mb v_0$ and $\xi$, and so,
$\tilde W$ is a time-like vector space
(cf. \cite[Lemma~27, Page 141]{ON}),
which is non-degenerate.
So, $\tilde W^\perp$ is also non-degenerate.
Since $V\subset \tilde W^\perp$
(cf. (2) and (3))
and $W\subset \tilde W$,
we have
$$
\{\mb 0\}=\tilde W^\perp \cap \tilde W \supset V \cap W,
$$
proving $V\cap W=\{\mb 0\}$.
\end{proof}

\section{A method to make an immersion defined 
on a connected manifold 
from a family of embeddings.}

\begin{definition}\label{def:UV1601}
We fix positive integers $n,m$ ($n<m$).
Let $U,V$ be two connected $n$-dimensional manifolds. We
let $f:U\to \R^{m}$ and $g:V\to \R^{m}$ be
two $C^r$-embeddings.
A point $x\in U$ is said to be
{\it $(f,g)$-related} to $y\in V$
if there exist an open neighborhood
$O_x(\subset U)$ of $x$
and an open neighborhood
$O_y(\subset V)$ of $y$
such that
\begin{itemize}
\item  $f(x)=g(y)$ and
\item  $f(O_x)=g(O_y)$.
\end{itemize}
If there are no $(f,g)$-related points on $U$ and $V$, we say that
``{\it $U$ is not $(f,g)$-related to $V$}".
\end{definition}

\begin{remark}\label{rmk:1621}
This ``$(f,g)$-relatedness" is an open condition.
In fact, if $x\in U$ is 
$(f,g)$-related to $y\in V$,
then there exist open neighborhoods $O_x(\subset U)$ 
and $O_y(\subset V)$
of $x$ and $y$ respectively such that
each point of $O_x$ is
 $(f,g)$-related to a certain point in $O_y$.
\end{remark}

In this setting, the following assertion holds.

\begin{lemma}\label{prop:unique}
If we set
\begin{align*}
O_{U,V}&:=\{x\in U\,;\, \text{$x$ is $(f,g)$-related to some $y\in V$}\}, \\
O_{V,U}&:=\{y\in V\,;\, \text{$y$ is $(g,f)$-related to some $x\in U$}\},
\end{align*}
then $O_{U,V}$ $($resp.~$O_{V,U})$ is an open subset of $U$ $($resp.~$V)$.
Moreover, if $O_{U,V}$ is non-empty,
then there exists a unique $C^r$-diffeomorphism
$\phi:O_{U,V}\to O_{V,U}$ satisfying
$g\circ\phi=f$ on $O_{U,V}$.
\end{lemma}

\begin{proof}
Assume that
$x\in U$ (resp. $y\in V$)
is $(f,g)$-related to $y\in V$ (resp. $x\in U$).
Since $f$ and $g$ are embeddings,
 $y\in V$ (resp. $x\in U$)
is uniquely determined, and so
the map $\phi$ is also uniquely determined.
The smoothness of $\phi$ is obvious.
\end{proof}

\begin{definition}\label{def:1632}
Let $f:U\to \R^{m}$ and $g:V\to \R^{m}$ 
be as in
Definition \ref{def:UV1601}.
Then the pair $(U,V)$ is said to be
{\it $(f,g)$-admissible} if,
for each pair $(x,y)\in U\times V$,
it holds that
\begin{itemize}
\item[(i)] $x$ is  $(f,g)$-related to $y$, or
\item[(ii)] there exist a neighborhood $O_x(\subset U)$ of $x$
and a neighborhood $O_y(\subset V)$ of $y$
such that $O_x$
is not $(f,g)$-related to $O_y$.
\end{itemize}
\end{definition}

By definition, if $f(U)$ does not meet $g(V)$,
then the pair $(U,V)$ is $(f,g)$-admissible.
The following assertion is immediately from the definition:

\begin{lemma}\label{lem:1653}
In the setting of
Definition \ref{def:1632},
the pair $(U,V)$ is {\it $(f,g)$-admissible} 
if $f(U)$ meets $g(V)$ transversally or 
$f$ and $g$ are both real analytic.
\end{lemma}

\begin{proof}
We fix a pair $(x,y)\in U\times V$ such that $x$ is not $(f,g)$-related to $y$.
If $f(U)$ meets $g(V)$ transversally, then the assertion is obvious.
So we may assume that $f$ and $g$ are both real analytic and
$f(U)$ does not meet $g(V)$ transversally.
Then $P:=f(x)=f(y)$
and we can take $n$-dimensional affine plane $T^n$ 
as a common tangential space of $f(U)$ and $g(V)$ at $P$ in $\R^m$.
Since $U$ (resp. $V$) is locally connected, there exists a 
connected open neighborhood $U_1$ (resp. $V_1$) of $x$ (resp. $y$)
such that $U_1\subset U$ (resp. $V_1\subset V$).
By the implicit function theorem, we may assume that the images of the maps
$f|_{U_1}$ and $g|_{V_1}$ are expressed as the graphs of certain
functions $F,G:\Omega\to \R^{m-n}$ 
defined on the same domain $\Omega$ in $T^n$.

It suffices to show that there exist a neighborhood 
 $O_x(\subset U_1)$ of $x$
and a neighborhood $O_y(\subset V_1)$ of $y$ such that 
$O_x$ is not $(f,g)$-related to $O_y$.
If not, there exists a pair $(x_1,y_1)\in O_x\times O_y$
such that $x_1$ is $(f,g)$-related to $y_1$,
which implies that there exist open neighborhoods $O_1(\subset O_x)$ 
and $O_2(\subset O_y)$ of $x_1$ and $y_1$, respectively, such that
each point of $O_1$ is $(f,g)$-related to a corresponding point in $O_2$.
Then the vector-valued function $F$ coincides with $G$ on
some non-empty open subset of $\Omega$.
By the connectedness of $\Omega$ and 
the real analyticity of $F$ and $G$, the two
vector-valued functions coincide identically on $\Omega$,
which implies that $x$ is $(f,g)$-related to $y$, a contradiction.
\end{proof}

\begin{definition}\label{def:1668}
Let $\mc A$ be an at most countable set, and 
let 
$\{(U_\lambda,f_\lambda)\}_{\lambda\in \mc A}$ be 
a family consisting of connected
$n$-dimensional manifolds $U_\lambda$
and embeddings $f_\lambda:U_\lambda\to \R^m$.
Then  $\{(U_\lambda,f_\lambda)\}_{\lambda\in \mc A}$ 
is called {\it admissible}
if $(U_\lambda,U_\mu)$ is $(f_\lambda,f_\mu)$-admissible
for any choice of $(\lambda,\mu)\in \mc A\times \mc A$.
\end{definition}

We let
$\{(U_\lambda,f_\lambda)\}_{\lambda\in \mc A}$ 
be
an admissible family as in
Definition \ref{def:1668}.
Consider the disjoint union
$
\mathcal S:=\coprod_{\lambda\in \mc A} U_\lambda,
$
and define
a relation $x\sim y$ for 
$x,y\in \mathcal S$ so that $x\sim y$ implies that
there exist a neighborhood $U_\lambda$ ($\lambda\in \mc A$)
of $x$
and a
neighborhood $U_\mu$ ($\mu\in \mc A$)
of $y$
such that $x$ is $(f_\lambda,f_\mu)$-related to $y$.
Then it is easy to check that $\sim$ is an equivalence relation.
Moreover, the following assertion holds:

\begin{proposition}\label{prop:1671}
Let
$\{(U_\lambda,f_\lambda)\}_{\lambda\in \mc A}$ be an admissible family.
Then there exist
\begin{itemize}
\item a manifold $M^n:=\mathcal S/{\sim}$,
\item a $C^r$-immersion $g:M^n\to \R^m$, and
\item a diffeomorphism $\Psi_\lambda:U_\lambda\to \Psi_\lambda(U_\lambda)\subset M^n$
\end{itemize}
such that
$\{(U_\lambda,\Psi_\lambda)\}_{\lambda\in \mc A}$ is
a differentiable structure of $M^n$
and 
$g\circ \Psi_\lambda$ coincides with 
$f_\lambda$ on $U_\lambda$ for each $\lambda\in \mc A$.
\end{proposition}

\begin{proof}
Since $\mathcal S$ is a disjoint union of open subsets, 
this $U_\lambda$ is uniquely determined by $x$.
So we denote $U_\lambda$ by $V_x$.
As we have already noted,
$\sim$ is an equivalence relation, and
the canonical projection
$
\pi:\mathcal S\to M^n:=\mathcal S/{\sim}
$
is induced.
Since $\mc A$ is an at most countable set,
$M^n$ satisfies the second axiom of countability.
If we show that the quotient space
$M^n$ is a Hausdorff space,
then we can easy observe that $M^n$
has a structure of $C^r$-manifold by using
Lemma~\ref{prop:unique},
and we can 
construct
the desired
$C^r$-immersion $g:M^n\to \R^m$
by setting 
\begin{equation}\label{eq:2542}
\Psi_\lambda:=\pi|_{U_\lambda}.
\end{equation}
So it is sufficient to prove that $M^n$ is a Hausdorff space:
Consider the set defined by
$$
\mathcal R:=\{(p,q)\in \mathcal S\times \mathcal S\,;\, p\,\,\sim\,\, q\}.
$$
By Remark \ref{rmk:1621},
the canonical projection
$\pi:\mathcal S\to M^n$ is an open map.
So we prove that
$\mathcal R$ is a closed subset of 
$\mathcal S\times \mathcal S$
(cf. \cite[Chapter 3, Theorem~11]{K0}).
We consider a sequence
$
\{(p_k,q_k)\}_{k=1}^\infty
$
in $\mathcal R$ and suppose that $\{p_k\}_{k=1}^\infty$ and $\{q_k\}_{k=1}^\infty$
converge to $p$ and $q$ in $\mathcal S$, respectively. 
By definition, there exist  $(U_\lambda,f_\lambda)$
and $(U_\mu,f_\mu)$ ($\lambda,\mu\in \mc A$)
and a positive integer $l$
such that 
\begin{itemize}
\item $V_p=U_\lambda$ and $V_q=U_\mu$,
\item $V_{p_k}=U_\lambda$ and $V_{q_k}=U_\mu$ for $k\ge l$.
\end{itemize}
We remark that the admissibility of $\{(U_\lambda,f_\lambda)\}_{\lambda
\in \mc A}$ implies 
the admissibility between $U_\lambda$ and $U_\mu$.
Since $p_k\to p$ and $q_k\to q$, we have
$f_\lambda(p)=f_\mu(q)$.
We suppose $p\not\sim q$. Then,
by (ii) of Definition 
\ref{def:1632}, there exist
a neighborhood $O_p(\subset U_\lambda)$ of $p$
and a neighborhood $O_q(\subset U_\mu)$ of 
$q$ such that 
$O_p$
is not $(f_\lambda,f_\mu)$-related to $O_q$.
However, this contradicts the fact that $p_k\sim q_k$.
So $M^n$ is a Hausdorff space.
\end{proof}

\begin{acknowledgements}
The authors thank Riku Kishida and the reviewer for valuable comments.
\end{acknowledgements}

\end{document}